\def\E{{\mathbb E}}
\newcommand{\ncom}{\newcommand}
\ncom{\ul}{\underline}
\ncom{\beq}{\begin{equation}}
\ncom{\eeq}{\end{equation}}
\ncom{\bea}{\begin{eqnarray*}}
\ncom{\eea}{\end{eqnarray*}}
\ncom{\beqa}{\begin{eqnarray}}
\ncom{\eeqa}{\end{eqnarray}}
\ncom{\nno}{\nonumber}
\ncom{\non}{\nonumber}
\ncom{\ds}{\displaystyle}
\ncom{\half}{\frac{1}{2}}
\ncom{\mbx}{\makebox{.25cm}}
\ncom{\hs}{\mbox{\hspace{.25cm}}}
\ncom{\rar}{\rightarrow}
\ncom{\Rar}{\Rightarrow}
\ncom{\noin}{\noindent}
\ncom{\bc}{\begin{center}}
\ncom{\ec}{\end{center}}
\ncom{\sz}{\scriptsize}
\ncom{\rf}{\ref}
\ncom{\s}{\sqrt{2}}
\ncom{\sgm}{\sigma}
\ncom{\Sgm}{\Sigma}
\ncom{\psgm}{\sigma^{\prime}}
\ncom{\dt}{\delta}
\ncom{\Dt}{\Delta}
\ncom{\lmd}{\lambda}
\ncom{\Lmd}{\Lambda}
\ncom{\Th}{\Theta}
\ncom{\e}{\eta}
\ncom{\eps}{\epsilon}
\ncom{\pcc}{\stackrel{P}{>}}
\ncom{\lp}{\stackrel{L_{p}}{>}}
\ncom{\dist}{{\rm\,dist}}
\ncom{\sspan}{{\rm\,span}}
\ncom{\re}{{\rm Re\,}}
\ncom{\im}{{\rm Im\,}}
\ncom{\sgn}{{\rm sgn\,}}
\ncom{\ba}{\begin{array}}
\ncom{\ea}{\end{array}}
\ncom{\hone}{\mbox{\hspace{1em}}}
\ncom{\htwo}{\mbox{\hspace{2em}}}
\ncom{\hthree}{\mbox{\hspace{3em}}}
\ncom{\hfour}{\mbox{\hspace{4em}}}
\ncom{\vone}{\vskip 2ex}
\ncom{\vtwo}{\vskip 4ex}
\ncom{\vonee}{\vskip 1.5ex}
\ncom{\vthree}{\vskip 6ex}
\ncom{\vfour}{\vspace*{8ex}}
\ncom{\norm}{\|\;\;\|}
\ncom{\integ}[4]{\int_{#1}^{#2}\,{#3}\,d{#4}}
\ncom{\vspan}[1]{{{\rm\,span}\{ #1 \}}}
\ncom{\dm}[1]{ {\displaystyle{#1} } }
\ncom{\ri}[1]{{#1} \index{#1}}
\newtheorem{theorem}{\bf Theorem}[section]
\newtheorem{remark}{\bf Remark}[section]
\newtheorem{proposition}{Proposition}[section]
\newtheorem{corollary}{Corollary}[section]
\newtheorem{definition}{Definition}[section]
\newtheoremstyle
    {remarkstyle}
    {}
    {11pt}
    {}
    {}
    {\bfseries}
    {:}
    {     }
    {\thmname{#1} \thmnumber{#2} }
\theoremstyle{remarkstyle}
\begin{document}

\newpage

\begin{center}
{\Large \bf Fractional Poisson Processes of Order $k$ and Beyond}
\end{center}
\vone
\vone
\begin{center}
{Neha Gupta}$^{\textrm{a}}$, {Arun Kumar}$^{\textrm{a}}$
\footnotesize{
		$$\begin{tabular}{l}
		$^{\textrm{a}}$ \emph{Department of Mathematics, Indian Institute of Technology Ropar, Rupnagar, Punjab - 140001, India}
\end{tabular}$$}
\end{center}
\vtwo
\begin{center}
\noindent{\bf Abstract}
\end{center}
In this article, we introduce fractional Poisson fields of order $k$ in n-dimensional Euclidean space $\mathbb{R}^{n}_{+}$. We also work on time-fractional Poisson process of order $k$, space-fractional Poisson process of order $k$ and tempered version of  time-space fractional Poisson process of order $k$ in one dimensional Euclidean space $\mathbb{R}^{1}_{+}$. These processes are defined in terms of fractional compound Poisson processes. Time-fractional Poisson process of order $k$ naturally generalizes the Poisson process and Poisson process of order $k$ to a heavy tailed waiting times counting process. The space-fractional Poisson process of order $k$, allows on average infinite number of arrivals in any interval. We derive the marginal probabilities, governing difference-differential equations of the introduced processes. We also provide Watanabe martingale characterization for some time-changed Poisson processes.\\

\noindent{\it Key Words:} Time-fractional Poisson process, Poisson process of order $k$, space-fractional Poisson process, infinite divisibility, homogeneous Poisson field, Watanabe martingale characterization.\\

\noindent{\it Mathematics Subject Classification (MSC):} 60G22, 60G51, 60G55. 

\section{Introduction}
The classical Poisson process is the most useful and popular counting process with index in one dimensional space $\mathbb{R}_+^{1}$. The waiting times in classical Poisson process are exponentially distributed and the mean arrival rate is constant over time. The non-homogeneous Poisson process is obtained by taking the mean arrival rate as a function of time $t.$ The classical Poisson process is a L\'evy process and hence doesn't possess long-range dependence property. The classical Poisson process is generalized in several directions based on inter-arrival times, arrival rate and number of arrivals at an instant. Moreover to model data with stochastic arrival rates, doubly stochastic or time-changed Poisson processes are introduced see e.g. Cox \cite{Cox1955} and Kingman \cite{Kingman1964}. 

The space-fractional Poisson process (SFPP) introduced by Orsingher and Polito \cite{Orsingher2012} is obtained by changing the time of standard Poisson process with a stable subordinator, in which arbitrary number of arrivals occur in small time interval.
The time-fractional Poisson process (TFPP) was introduced by Laskin \cite{Laskin2003} by taking Mittag-Leffler distributed waiting times instead of exponential waiting times in a classical Poisson process. Mittag-Leffler distribution is a heavy tailed distribution due to which TFPP has less number of arrivals than the classical Poisson process on average for sufficiently large $t$. Further TFPP has long-range dependence property (see Biard and Saussereau \cite{Biard2014}). The TFPP is a time-changed Poisson process where time-change is an inverse stable subordinator. Time-changed fractional Poisson process are also recently considered in literature (see e.g. Maheshwari and Vellaisamy \cite{Maheshwari2019, Maheshwari2019a}). 

Another extension of classical Poisson process is to use a general set as an index (see Herbin and Merzbach \cite{Herbin2013}) instead of time or $\mathbb{R}_+^{1}$. In $n$-dimensional Euclidean space $\mathbb{R}^{n}_{+}$ the homogeneous Poisson field is useful model for a random pattern of points
(see Merzbach and Nualart \cite{Merzbach1986}) which is also known as spatial point process having geometric interpretation with application in different areas (see Stoyan et al. \cite{Stoyan1995}).
For example, the spatial point processes are useful in the analysis of observed patterns of points,
where the points represent the locations of disease cases, petty crimes and emergency services etc.

A limitation of standard Poisson process is that there could be at most one arrival at an instant. 
The Poisson distribution of order $k$ (see e.g. Philippou \cite{Philippou1983}) was obtained as a limiting distributions of a sequence of shifted negative binomial distribution of order $k$ Philippou et al. \cite{Georghiou}.  To overcome the limitations of standard Poisson process, Poisson process of order $k$ (PPoK), where at most $k$ arrivals can occur at an instant  was introduced and analyzed as compound Poisson process representation and pure birth process (see e.g.  Kostadinova and Minkova \cite{Kostadinova2012}). The PPoK has applications in standard risk models, where the arrivals of claims in a group of size $k$ could occur. 

Homogeneous Poisson fields are another generalizations of standard Poisson process where index is a general set instead of time(see Merzbach and Nualart \cite{Merzbach1986}, Stoyan et al. and Stoyan et al. \cite{Stoyan1995}). In Leonenko and Merzbach \cite{Leonenko2015}, the author study the fractional Poisson fields which is obtained by time-changing the homogeneous Poisson field of the Euclidean space $\mathbb{R}^{2}_{+}$ with inverse subordinator. 

In this article, we introduce and study time-fractional,  space-fractional Poisson processes of order $k$ on positive real line  $\mathbb{R}_{+}$ and on higher dimension $\mathbb{R}_{+}^{n}$. These processes generalize the Poisson process, homogeneous Poisson field, Poisson process of order $k$, TFPP and SFPP in several directions. 

The rest of the article is organize as follows. In Section 2, we provide all the relevant definitions and results which are used in subsequent sections. In Section 3, we introduce homogeneous Poisson field of order $k$ and their properties.
Section 4 deals with time-fractional Poisson process of order $k$ (TFPPoK). The space-fractional Poisson process of order $k$ (SFPPoK) is introduced in Section 5. To generalize time- and space-fractional Poisson process of order $k$, in Section 6, tempered time-space fractional Poisson process of order $k$ (TTSFPPoK) is given. Section 7 is focused on time- and space-fractional Poisson fields of order $k.$
In the last section Watanabe martingale characterization is given for some time-changed Poisson processes.

\section{Preliminaries} 
In this section, we recall some relevant definitions and properties of homogeneous Poisson field, PPoK, TFPP, SFPP and TTSFPP which will be used in analyzing the fractional Poisson processes and fractional Poisson fields of order $k$.

\subsection{Poisson process of order $k$ (PPoK)}
In this section, we provide important properties of PPoK denoted by $N^{k}(t),\;t\geq 0,$ defined as a compound Poisson process, introduced in Kostadinova and Minkova \cite{Kostadinova2012}. The PPoK $N^{k}(t)$ is given by
\begin{equation}\label{ppok_def}
N^{k}(t) = \sum_{i=1}^{N(t)}X_{i},
\end{equation}
where $X_{i},\; i=1,2,\ldots$ are independent identically distributed (iid) discrete uniform random variables with support $\{1,2,\cdots,k\}$, which are independent to the Poisson process $N(t)$. Further, $N(t)$ is a homogeneous Poisson process with intensity $k \lambda$.
The probability mass function (pmf) is given by 
\begin{equation}\label{pmf_ppok}
    p^{k}(n,t)=\mathbb{P}(N^{k}(t) = n) = \sum_{X \in\Omega(k,n)} e^{-k\lambda t} \frac{(\lambda t)^{\zeta_{k}}}{\Pi_{k}!},
\end{equation}
where $ x_1,\ x_2,\ \dots,\ x_k $ be non-negative integers and $\zeta_{k}  = x_1 + x_2 + \dots + x_k, \; \Pi_{k}! = x_1!x_2!\cdots x_k! $ and
\begin{equation}
    \Omega(k,n) = \{X = (x_1,\ x_2,\ \dots,\ x_k) | x_1 + 2x_2+ \dots + kx_k=n\}.
\end{equation}
\noindent The probability generating function (pgf) $G_{N^{k}}(u,t)$  is given by (see Philippou \cite{Philippou1983})  
\begin{equation}\label{pgf_ppok}
   G_{N^{k}}(u,t) = e^{-\lambda t(k-\sum_{j=1}^{k} u^{j})}.
\end{equation}
\subsection{Homogeneous Poisson field}
 A collection of random variables $\{N( A) : A \in \mathcal{B} \}$ is a Poisson field (see Stoyan et al. \cite{Stoyan1995}) with intensity $\lambda > 0$, where $\mathcal{B}$ is the class of Borel subsets on n-dimensional Euclidean space $\mathbb{R}^{n}_{+}$, if
the following axioms are satisfied:
\begin{itemize}
    \item Only non negative integer values are assumed by $N(A)$ and $\mathbb{P}(N(A)>0)<1$ if $m_{d}(A)>0$.
     \item If $\{A_1, A_2,\ldots,A_n\}$ is a sequence of pairwise disjoint subsets of $\mathcal{B}$, then $\{ N( A_1), N( A_2), \ldots, N(A_n)\}$ is a sequence of
      independent random variables.
     \item $\displaystyle \lim_{m_{d}(A)\rightarrow 0}\frac{\mathbb{P}(N(A)\geq 1)}{\mathbb{P}(N(A)=1)}=1$.
\end{itemize}

 The pmf of homogeneous Poisson field is given by
 \begin{equation}\label{pmf_hpf}
 \mathbb{P}(N(A)=n)= e^{-\lambda m_{d}(A)}\frac{(\lambda m_{d}(A))^{n}}{n!},\;\; n=0,1,2,\ldots,
 \end{equation}
where $m_{d}(A)$ represents the area or volume of $A$, depending on whether $A$ is a region in the plane or a subset of higher dimensional space. The mean and covariance of homogeneous Poisson field are given by (see Leonenko and Merzbach \cite{Leonenko2015})
 \begin{align*}
 \mathbb{E}[N(A)] &= \lambda m_d(A);\\
 \mathrm{Cov}[N(A_1), N(A_2) ] &= \lambda  m_d(A_1 \cap A_2).
 \end{align*}
\subsection{Fractional Poisson processes}
In this section, we recall important properties related to SFPP and TFPP discussed in Orsingher and Polito \cite{Orsingher2012} and Laskin \cite{Laskin2003} respectively.  The TFPP is the generalization of standard Poisson process with Mittag-Leffler waiting times. The TFPP can also be obtained by time-changing the standard Poisson process $N(t)$ by an independent inverse stable subordinator $E_{\beta}(t)$ (see Meerschaert et al. \cite{Meerschaert2011}) such that
\begin{equation}
N_{\beta}(t) = N(E_{\beta}(t)),
\end{equation}
where $E_{\beta}(t) = \inf\{s>0: S_{\beta}(s) > t\}$  and $S_{\beta}(t)$ is the stable subordinator with Laplace transform (LT)
\begin{align}
\mathbb{E}\left(e^{-z S_{\beta}(t)} \right) = e^{-tz^\beta},\; z>0,\; t\geq 0,\; \beta\in(0,1).
\end{align}
The density $f_{\beta}(x,t)$ of stable subordinator $S_{\beta}(t)$ has the following infinite-series form (see Uchaikin and Zolotarev \cite{Uchaikin1999})
\begin{equation}\label{inverse_result}
f_{\beta}(x, t)= t^{-1/\beta}f_{\beta}(t^{-1/\beta}x, 1) =\sum_{k=1}^{\infty}(-1)^{k+1}\frac{\Gamma(\beta k+1)}{k!}\frac{t^k}{x^{\beta k+1}}\frac{\sin(\pi k \beta)}{\pi}=\frac{1}{x}W_{-\beta, 0}(-tx^{-\beta}), \; x>0,
\end{equation}
where $ W_{\gamma, \eta}(z)$ is the Wright's generalized Bessel function (see, e.g.,  Haubold et al. \cite{Haubold2011}).
The process $E_{\beta}(t)$ is non-Markovian with non-stationary increments (see Bingham \cite{Bingham1971}) and has density
\begin{equation}\label{pdf_inverse}
h_{\beta}(x,t) = \frac{x^{-1}}{\beta}W_{-\beta, 0}(-\frac{x}{t^{\beta}}), \; \; x > 0,\;\; t>0,
\end{equation} 
which are not infinitely divisible (see Kumar and Nane \cite{kumar_nane2018}). 
\noindent The pmf $p_{\beta}(n,t) = \mathbb{P}(N_{\beta}(t) = n)$ satisfies the following fractional differential-difference equations with Caputo-Djrbashain fractional derivative in time
\begin{align}\label{derivative in time}
\frac{d^{\beta}}{dt^{\beta}}p_{\beta}(n,t) = -\lambda^{\beta} (p_{\beta}(n,t) - p_{\beta}(n-1,t)) = -\lambda^{\beta} (1-B) p_{\beta}(n,t),
\end{align}
with initial conditions
\begin{equation}
 p_{\beta}(n, 0) =\delta_{n,0}=
          \begin{cases}
                  0,& n\neq 0,\\
                  1, & n=1.
           \end{cases}
\end{equation}
The Caputo-Djrbashain (CD) fractional derivative of order $\beta\in(0,1],$ for a function $g(t),\;t\geq0$ is defined (see Meerschaert and Sikorski \cite{Meerschaert2012}, Sections 2.2, 2.3)) as
\begin{equation}\label{Caputo_Djrbashian_Derivative}
\frac{d^{\beta}}{dt^{\beta}}g(t)= \frac{1}{\Gamma{(1-\beta)}}\int_{0}^{t}\frac{dg(\tau)}{d{\tau}}\frac{d{\tau}}{(t-\tau)^{\beta}},\; \beta\in(0,1].
\end{equation}
\noindent The LT of CD fractional derivative is given by (see e.g., Meerschaert and Sikorski \cite{Meerschaert2012}, p.39)
\begin{align}\label{Caputo_LT}
\mathcal{L}\left(\frac{d^{\beta}}{dt^{\beta}}g(t)\right)=\int_{0}^{\infty}e^{-st}\frac{d^{\beta}}{dt^{\beta}}g(t)dt=s^{\beta}\tilde {g}(s)-s^{\beta-1}g(0^{+}),\; 0<\beta \leq 1,
\end{align}
where $\tilde {g}(s)$ is the LT of the function $g(t),\;t\geq0$, such that
$$
\mathcal{L}(g(t))= \tilde {g}(s)=\int_{0}^{\infty}e^{-st}g(t)dt.
$$
The pgf of TFPP is (see e.g. Laskin \cite{Laskin2003})
\begin{equation}\label{pgftfpp}
G_{N_\beta}(u,t) = M_{\beta,1}(-\lambda t^{\beta}(1-u)) ,\; u \in (0, 1).
\end{equation}
Here $M_{a,b}^c(z)$ is generalized Mittag-Leffler function, introduced by Prabhakar \cite{Prabhakar1971}, 
\begin{equation}
M_{a,b}^c(z) = \sum_{n=0}^{\infty}\frac{(c)_n}{\Gamma(a n + b)}\frac{z^n}{n!}, 
\end{equation}
where $a, b, c \in \mathbb{C}$ with $\mathcal{R}(b) >0$ and $(c)_n$ is Pochhammer symbol. When $c = 1$, it reduces to Mittag-Leffler function.
\begin{align}\label{Mittag}
M_{a,b}(z)=\sum_{k= 0}^{\infty}\frac{z^{k}}{\Gamma(ak+b)},\; z\in \mathbb{C},\; a,b > 0.
\end{align}

Next, we provide the important properties of SFPP. The subordination representation of SFPP is introduced by Orsingher and Polito \cite{Orsingher2012}, denoted as

$$N^{\alpha}(t) =N(S_{\alpha}(t)),$$
where the stable subordinator $S_{\alpha}(t)$ and the homogeneous Poisson process $N(t)$ are assumed to be independent.
The pmf $q^{\alpha}(n,t) = \mathbb{P}(N_{\alpha}(t) = n)$ of the SFPP satisfies the following differential fractional difference equation (see Orsingher and Polito \cite{Orsingher2012})
\begin{align}\label{SFPP}
 \frac{d}{dt}q
 ^{\alpha}(n,t) &= -\lambda^\alpha (1-B)^\alpha q^{\alpha}(n,t),\; n=0,1,2,\ldots 
\end{align}
with initial condition           
\begin{equation}\label{initial-conditions}  
  q^{\alpha}(n,0) = \delta_{n,0}.
\end{equation}
The fractional difference operator $(1-B)^{\alpha}$ is defined as (see Beran \cite{Beran1994})
$$
(1-B)^{\alpha} = \sum_{j=0}^{\infty}{\alpha \choose j} (-1)^j B^j,
$$
where $B$ is a backward shift operator.
\noindent The pgf of SFPP is given by
\begin{equation}\label{pgfsfpp}
G_{N^\alpha}(u,t) =e^{-k^{\alpha} \lambda^{\alpha} t(1-u)^{\alpha}},\; |u|<1,\;\; \alpha \in (0, 1).
\end{equation}

\subsection{Tempered time space fractional Poisson process (TTSFPP)}
The TTSFPP (see Gupta et al. \cite{Gupta2020}) is defined as a time-changed Poisson process
\begin{equation}\label{tempered-TSFPP-subordination}
N^{\alpha, \beta}_{\mu,\nu}(t): = N\left(S_{\alpha,\mu}(E_{\beta,\nu}(t))\right),
\end{equation}
where tempered stable subordinator $S_{\alpha,\mu}(t),\; \alpha \in(0,1),\; \mu>0$ is obtained by exponential tempering in the distribution of stable subordinator (see Rosi\'nski \cite{Rosinski2007}). The inverse tempered stable subordinator (see Kumar and Vellaisamy \cite{Kumar2014}) $E_{\beta,\nu}(t)= \inf\{r>0 : S_{\beta,\nu}(r) > t\},\; \beta \in(0,1),\; \nu>0,$ is the right-continuous inverse of tempered stable subordinator.
The pmf $p^{\alpha, \beta}_{\mu, \nu} (n,t) = \mathbb{P}(N^{\alpha, \beta}_{\mu,\nu}(t) = n)$ of TTSFPP satisfies the following difference-differential equation 
\begin{align}\label{tempered-TSFPP}
\frac{d^{\beta,\nu}}{dt^{\beta,\nu}}p^{\alpha, \beta}_{\mu, \nu} (n,t) = - ((\mu + \lambda(1-B))^{\alpha} - \mu^{\alpha}) p^{\alpha, \beta}_{\mu, \nu} (n,t), 
\end{align}
where $\frac{d^{\beta,\nu}}{dt^{\beta,\nu}}$ is the Caputo tempered fractional derivative of order $\beta\in (0,1)$ with tempering parameter $\nu>0$ is defined by
$$
\frac{d^{\beta,\nu}}{dt^{\beta,\nu}}g(t) =  \frac{1}{\Gamma(1-\beta)}\frac{d}{dt}\int_{0}^{t}\frac{g(u)du}{(t-u)^{\beta}} - \frac{g(0)}{\Gamma(1-\beta)}\int_{t}^{\infty}e^{-\nu r}\beta r^{-\beta-1}dr.
$$
The LT for the Caputo tempered fractional derivative for a function $g(t)$ satisfies
\begin{equation}
\mathcal{L}\left[\frac{d^{\beta,\nu}}{dt^{\beta,\nu}}g\right](s) = ((s+\nu)^{\beta}-\nu^{\beta})\tilde{g}(s) - s^{-1}((s+\nu)^{\beta}-\nu^{\beta}) g(0). 
\end{equation} 
\noindent The pgf of TTSFPP ${N^{\alpha, \beta}_{\mu, \nu}(t)}$ (see Gupta et al. \cite{Gupta2020})
\begin{equation}\label{pgfttsfpp}
G_{N^{\alpha, \beta}_{\mu, \nu}}(u,t) =\sum_{r=0}^{\infty}(-1)^{r}\left[(\mu-k\lambda(1-u))^{\alpha}-\mu^{\alpha}\right]^{r}e^{-t\nu}\sum_{m=0}^{\infty}{\nu}^{m}t^{\beta r +m} M^{r}_{\beta,\beta r+m+1}({t}^{\beta}{\nu}^{\beta}),\; |u|<1.
\end{equation}

\section{Homogeneous Poisson field of order $k$}
In this section, we define and discuss main properties of homogeneous Poisson filed of order $k$.
\begin{definition}
Let $\{N(A) : A \in \mathcal{B} \}$ be homogeneous Poisson field with parameter $k\lambda M_{d}(A)$, where $ m_{d}(A)$ is $d$-dimensional measure defined on  Borel subset $A$ of $\mathbb{R}^{d}$ and $X_i,\;i=1,2,\cdots,$ be the iid discrete uniform random variables such that $\mathbb{P}(X_i = j) = \frac{1}{k},\; j=1,2,\cdots, k$. Then the process $X^k(A)$ defined on subsets of $\mathbb{R}^{d}$
\begin{equation}\label{tfppok}
X^{k}(A) = \sum_{i=1}^{N(A)} X_i,
\end{equation}
is called homogeneous Poisson field of order $k$.
\end{definition}
\noindent The pgf of homogeneous Poisson field of order $k$ is
$$ G_{X^k}(u, A) = \mathbb{E}(u^{X^k(A)})= e^{-k\lambda m_{d}(A) \left(1-\frac{1}{k}\sum_{i=0}^{k} u^{i} \right)}.
$$
Putting $k=1$ gives the pgf of homogeneous Poisson field $N(A)$
$$ G_{N}(u, A) = \mathbb{E}(u^{N(A)})= e^{-k\lambda m_{d}(A) (1-u) }.
$$
The mean and variance of homogeneous Poisson field of order $k$ are given by 
\begin{align*}
    \mathbb{E}[X^k(A)]&=\mathbb{E}[N(A)]\mathbb{E}[X_1]=\frac{k(k+1)}{2}\lambda m_{d}(A)\\
    {\rm Var}[X^k(A)]& = \mathbb{E}[N(A)]\mathbb{E}[X_1^2]=\frac{k(k+1)(2k+1)}{6} \lambda m_{d}(A).
\end{align*}
\begin{proposition}\label{pmf_pfok}
The pmf $p^{k}_{X}(n,A) = \mathbb{P}(X^{k}(A)=n)$ of homogeneous Poisson field of order $k$ is given by
$$
p^{k}_{X}(n,A) = e^{-k\lambda m_{d}(A)} \sum_{X \in \Omega(k,n)}  \frac{(\lambda m_{d}(A))^{\zeta_{k}}}{\Pi_{k}!},
$$
where $ x_1,\ x_2,\ \dots,\ x_k $ be non-negative integers and $\zeta_{k}  = x_1 + x_2 + \dots + x_k, \; \Pi_{k}! = x_1!x_2!\cdots x_k! $ and
\begin{equation}
    \Omega(k,n) = \{X = (x_1,\ x_2,\ \dots,\ x_k) | x_1 + 2x_2+ \dots + kx_k= n\}.
\end{equation}

\end{proposition}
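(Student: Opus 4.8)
The plan is to derive the pmf directly from the compound-Poisson definition \eqref{tfppok} by conditioning on the number of underlying Poisson events $N(A)$, and then to regroup the resulting sum according to how many of the iid uniform summands take each possible value $1,2,\dots,k$. First I would write
$$
p^{k}_{X}(n,A)=\mathbb{P}\!\left(\sum_{i=1}^{N(A)}X_i=n\right)=\sum_{m=0}^{\infty}\mathbb{P}(N(A)=m)\,\mathbb{P}\!\left(\sum_{i=1}^{m}X_i=n\right),
$$
using the independence of the $X_i$ from the Poisson field $N(A)$, and substitute the known homogeneous Poisson field pmf \eqref{pmf_hpf} with parameter $k\lambda m_d(A)$, giving $\mathbb{P}(N(A)=m)=e^{-k\lambda m_d(A)}(k\lambda m_d(A))^m/m!$.

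The key combinatorial step is to evaluate the inner probability $\mathbb{P}(\sum_{i=1}^m X_i=n)$. Since each $X_i$ is uniform on $\{1,\dots,k\}$, I would let $x_j$ denote the number of indices $i$ for which $X_i=j$, so that $x_1+x_2+\cdots+x_k=m$ counts the total number of summands and $x_1+2x_2+\cdots+kx_k=n$ records the value of the sum; the latter condition is exactly membership in $\Omega(k,n)$. By the multinomial theorem, the number of ordered tuples $(X_1,\dots,X_m)$ realizing a given occupancy vector $(x_1,\dots,x_k)$ is $m!/(x_1!\cdots x_k!)=m!/\Pi_k!$, and each such tuple has probability $(1/k)^m$. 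Hence for fixed $m$,
$$
\mathbb{P}\!\left(\sum_{i=1}^{m}X_i=n\right)=\sum_{\substack{X\in\Omega(k,n)\\ \zeta_k=m}}\frac{m!}{\Pi_k!}\,\frac{1}{k^{m}},
$$
where $\zeta_k=x_1+\cdots+x_k$ is the total summand count.

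Finally I would substitute this expression back into the conditioning sum and simplify. Interchanging the order of summation — first over $X\in\Omega(k,n)$, then recognizing $m=\zeta_k$ is determined by each $X$ — the factors $m!$ and $k^m$ cancel against the corresponding pieces of the Poisson pmf, since $(k\lambda m_d(A))^m/m!\cdot m!/(k^m\,\Pi_k!)=(\lambda m_d(A))^{\zeta_k}/\Pi_k!$. Pulling the common factor $e^{-k\lambda m_d(A)}$ outside yields the claimed formula. The main obstacle is purely bookkeeping: correctly identifying that summing over all $m$ together with the constraint $\zeta_k=m$ collapses to a single unconstrained sum over $\Omega(k,n)$, and verifying the exact cancellation of the factorial and $k^m$ terms. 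This mirrors the derivation of the PPoK pmf \eqref{pmf_ppok}, with the area measure $m_d(A)$ playing the role of the time variable $t$, so no genuinely new difficulty arises beyond careful tracking of the combinatorial weights.
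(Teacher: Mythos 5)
Your proposal is correct. It differs from the paper's proof in presentation rather than in substance: the paper starts from the already-stated pgf $G_{X^k}(u,A)$, expands the exponential $e^{\lambda m_d(A)(u+\cdots+u^k)}$ as a power series, applies the multinomial theorem to $(u+u^2+\cdots+u^k)^m$, and reads off the coefficient of $u^n$; you instead condition directly on $N(A)=m$, compute $\mathbb{P}(\sum_{i=1}^m X_i=n)$ by the same multinomial count of occupancy vectors, and observe that the constraint $\zeta_k=m$ collapses the double sum. The combinatorial core (multinomial weights $m!/\Pi_k!$ indexed by $\Omega(k,n)$) is identical, so the two arguments are essentially dual to one another. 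What your route buys is self-containedness — it derives the pmf from the compound-Poisson definition alone, without taking the pgf formula as an input, and the cancellation $(k\lambda m_d(A))^m/(m!\,k^m)\cdot m!/\Pi_k!=(\lambda m_d(A))^{\zeta_k}/\Pi_k!$ is made fully explicit — whereas the paper's pgf route is slightly shorter given that the generating function has already been written down, and it simultaneously re-verifies that formula. Both are complete and correct.
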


\begin{proof}
The pgf of the homogeneous Poisson field of order $k$,
\begin{align*}
    G_{X^k}(u, A) &= e^{-k\lambda m_{d}(A) + \lambda m_{d}(A) \sum_{i=0}^{k} u^{i}  }\\
    &= e^{-k\lambda m_{d}(A)} \sum_{m=0}^{\infty} \frac{(\lambda m_{d}(A))^m}{m!}(u+u^2+\cdots+u^k)^{m}\\
    & = e^{-k\lambda m_{d}(A)} \sum_{m=0}^{\infty} (\lambda m_{d}(A))^m \sum_{n_1, n_2,\ldots, n_k} \frac{1}{\Pi_{k}!} u^{x_1+2x_2+\cdots+kx_k},
\end{align*}
setting $x_i = n_i$ and $m=n-\sum_{i=1}^{k}(i-1)n_i$, we have
$$
G_{X^k}(u, A) = \sum_{n=0}^{\infty} u^{n} \left(  e^{-k\lambda m_{d}(A)} \sum_{X\in \Omega(k,n)}  \frac{(\lambda m_{d}(A))^{\zeta_{k}}}{\Pi_{k}!}  \right).
$$
The result follows by taking the coefficient of $u^n.$
\end{proof}
\begin{remark}
Putting $k=1$, the homogeneous Poisson field of order $k$ reduces to homogeneous Poisson field.
\end{remark}

\begin{corollary}
Suppose that $A_1, A_2 \in \mathcal{B}$ and $ A_2 \subseteq A_1$, then
\begin{align}\label{cond_hdpp}
\mathbb{P}(X^k(A_2)=m|X^k(A_1)=n) &= \frac{\mathbb{P}(X^k(A_1-A_2)=n-m) \mathbb{P}(X^k(A_2)=m))}{ \mathbb{P}(X^k(A_1)=n)}\nonumber\\
&= \frac{\left(\sum_{X\in \Omega(k,n-m)}  \frac{(\lambda m_{d}(A_1-A_2))^{\zeta_{k}}}{\Pi_{k}!}\right)\left(\sum_{X\in\Omega(k,m)}  \frac{(\lambda m_{d}(A_2))^{\zeta_{k}}}{\Pi_{k}!}\right)}{\sum_{X\in\Omega(k,n)}  \frac{(\lambda m_{d}(A_1))^{\zeta_{k}}}{\Pi_{k}!}}.
\end{align}
\end{corollary}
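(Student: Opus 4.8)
The plan is to reduce both displayed equalities to the additivity and independence properties of the underlying Poisson field, after which the result follows by a direct substitution of the pmf from Proposition~\ref{pmf_pfok}. Since $A_2 \subseteq A_1$, I would first write $A_1$ as the disjoint union $A_1 = A_2 \cup (A_1 - A_2)$. Because the underlying field $N$ is a counting measure, it is additive on this decomposition, so $N(A_1) = N(A_2) + N(A_1 - A_2)$, and the compound construction \eqref{tfppok} inherits this additivity by splitting the sum $\sum_{i=1}^{N(A_1)} X_i$ into the contributions of the marks attached to the two disjoint regions, giving $X^k(A_1) = X^k(A_2) + X^k(A_1 - A_2)$.

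Next I would invoke the independence axiom for the Poisson field: since $A_2$ and $A_1 - A_2$ are disjoint, $N(A_2)$ and $N(A_1 - A_2)$ are independent, and as the marks $X_i$ attached to distinct points are iid and independent of $N$, the compound totals $X^k(A_2)$ and $X^k(A_1 - A_2)$ are independent as well. The first equality is then just the definition of conditional probability: using additivity, I rewrite the event $\{X^k(A_2)=m,\ X^k(A_1)=n\}$ as $\{X^k(A_2)=m,\ X^k(A_1 - A_2)=n-m\}$, and the independence factors the joint probability into the product in the numerator, yielding the stated ratio.

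For the second equality I would substitute the pmf from Proposition~\ref{pmf_pfok} into each of the three factors $\mathbb{P}(X^k(A_1 - A_2)=n-m)$, $\mathbb{P}(X^k(A_2)=m)$ and $\mathbb{P}(X^k(A_1)=n)$. The exponential prefactors in the numerator combine as $e^{-k\lambda m_{d}(A_1 - A_2)}\,e^{-k\lambda m_{d}(A_2)} = e^{-k\lambda(m_{d}(A_1 - A_2)+m_{d}(A_2))} = e^{-k\lambda m_{d}(A_1)}$ by additivity of the measure $m_{d}$ on the disjoint union, and this cancels exactly against the exponential in the denominator, leaving precisely the quotient of the three $\Omega(k,\cdot)$-sums displayed in \eqref{cond_hdpp}.

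The only genuinely delicate step is the first one, namely justifying that the compound field $X^k$ is both additive and has independent pieces over the disjoint decomposition of $A_1$; this must be traced back carefully to the additivity of the counting field $N$ and the independence axiom together with the independence of the marks. Once the representation $X^k(A_1) = X^k(A_2) + X^k(A_1 - A_2)$ with independent summands is secured, the remainder is routine bookkeeping and the cancellation of the exponential factors.
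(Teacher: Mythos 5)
Your proof is correct and follows the route the paper implicitly intends (the corollary is stated there without proof): decompose $A_1$ into the disjoint pieces $A_2$ and $A_1-A_2$, use additivity of the counting field and independence over disjoint sets to factor the joint probability, and then substitute the pmf of Proposition~\ref{pmf_pfok}, with the exponential prefactors cancelling by additivity of $m_d$. You are right to flag the independence/additivity of the compound field as the one step needing care, and your marked-point-process justification (disjoint blocks of the iid marks attached to the two regions) handles it adequately.
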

\begin{remark}
For $k=1$, the conditional distribution of $N(A_2)|\{N(A_1)=n\}$ (see Ross \cite{Ross2009}) is 
$$
\mathbb{P}(N(A_2)=m|N(A_1)=n) = {n \choose m}\left(\frac{ m_{d}(A_2)}{ m_{d}(A_1)}\right)^{m}\left(1-\frac{ m_{d}(A_2)}{ m_{d}(A_1)}\right)^{n-m},
$$
i.e. $N(A_2)|\{N(A_1)=n\} \sim {\rm Binomial}\left(n,\frac{ m_{d}(A_2)}{ m_{d}(A_1)}\right).$
\end{remark}

\section{Time-fractional Poisson process of order $k$ (TFPPoK)}
In this section, we introduce the time-fractional Poisson process of order $k$ (TFPPoK) and discuss its main characteristics.
\begin{definition}[TFPPoK]
Let $N_{\beta}(t, k\lambda)$ be the TFPP with rate parameter $k\lambda > 0$ and $X_i,\;i=1,2,\cdots,$ be the iid discrete uniform random variables such that $\mathbb{P}(X_i = j) = \frac{1}{k},\; j=1,2,\cdots, k$. Then the process defined by
\begin{equation}\label{tfppok}
N_{\beta}^k(t) = \sum_{i=1}^{N_{\beta}(t, k\lambda)} X_i,
\end{equation}
is called the TFPPoK.
\end{definition}
\noindent The pgf $G_{X_1}(u)$ of $X_1$ is
\begin{equation}\label{pgf_disuni}
G_{X_1}(u) = \mathbb{E}(u^{X_1}) = \frac{u}{k}\frac{(1-u^k)}{(1-u)}.
\end{equation}
With the help of \eqref{pgf_disuni} and \eqref{pgftfpp}, the pgf of TFPPoK $N_{\beta}^k(t)$ is given by
\begin{equation}\label{pgf_tfppok}
    G_{N_{\beta}^k}(u,t) = M_{\beta, 1}(-k \lambda t^{\beta}(1-G_{X_1}(u))).
\end{equation}
Using the standard conditioning argument the mean and covariance function of $N_{\beta}^k(t)$ for $s \leq t $ are
$$
\mathbb{E}[N_{\beta}^k(t)] = \frac{k(k+1)}{2}\frac{t^{\beta}}{\Gamma(1+\beta)};$$
$$
\mathrm{Cov} \left(N_{\beta}^k(t), N_{\beta}^k(s)\right) = \frac{k(k+1)(2k+1)}{6} \lambda \mathbb{E}[E_{\beta}(s)]+ \frac{k^2(k+1)^2 \lambda^2}{4} \mathrm{Cov} (E_{\beta}(s), E_{\beta}(t)).
$$
\begin{theorem}
The pmf $p_{\beta}^k(n,t) = \mathbb{P}(N_{\beta}^{k}(t) = n)$ of TFPPoK satisfies the following fractional differential-difference equation
\begin{align}
 \frac{d^{\beta}}{dt^{\beta}}p_{\beta}^k(n,t) &= -k \lambda p_{\beta}^k(n,t) + \lambda\sum_{j=1}^{n\wedge k}p_{\beta}^k(n-j,t),\;\; n= 0, 1,2,\ldots,
\end{align}
with initial conditions $p_{\beta}^k(0,0) = 1$ and $p_{\beta}^k(n,0) = 0$ and $n\wedge k = \min\{k,n\}$.
\end{theorem}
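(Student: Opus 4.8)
The plan is to work entirely at the level of the probability generating function and to exploit the fact that the Mittag--Leffler function is the eigenfunction of the Caputo--Djrbashian derivative. Starting from the pgf \eqref{pgf_tfppok}, namely $G_{N_{\beta}^k}(u,t) = M_{\beta,1}(-k\lambda t^{\beta}(1-G_{X_1}(u)))$, I would first record the elementary identity coming from \eqref{pgf_disuni}, that $1-G_{X_1}(u) = 1 - \frac{1}{k}\sum_{j=1}^{k}u^{j}$, so that $k\lambda(1-G_{X_1}(u)) = k\lambda - \lambda\sum_{j=1}^{k}u^{j}$.

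Next I would apply $\frac{d^{\beta}}{dt^{\beta}}$ to the generating function. Writing $c(u) = k\lambda(1-G_{X_1}(u))$ and expanding the Mittag--Leffler function via \eqref{Mittag} as $G_{N_\beta^k}(u,t) = \sum_{m=0}^{\infty} \frac{(-c(u))^m t^{\beta m}}{\Gamma(\beta m + 1)}$, I would differentiate term by term using $\frac{d^{\beta}}{dt^{\beta}} t^{\beta m} = \frac{\Gamma(\beta m+1)}{\Gamma(\beta(m-1)+1)} t^{\beta(m-1)}$ for $m\geq 1$ and $0$ for $m=0$. Reindexing the resulting series yields the fractional relaxation (eigenfunction) relation $\frac{d^{\beta}}{dt^{\beta}} G_{N_\beta^k}(u,t) = -c(u)\, G_{N_\beta^k}(u,t)$. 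Substituting the expression for $c(u)$ then gives
$$\frac{d^{\beta}}{dt^{\beta}} G_{N_\beta^k}(u,t) = \Big(-k\lambda + \lambda\sum_{j=1}^{k}u^{j}\Big) G_{N_\beta^k}(u,t).$$

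Finally I would extract the equation for $p_\beta^k(n,t)$ by comparing coefficients of $u^n$. On the left, since $G_{N_\beta^k}(u,t) = \sum_{n\geq 0} p_\beta^k(n,t) u^n$, the fractional derivative passes through the sum to give $\sum_{n\geq 0}\big(\tfrac{d^\beta}{dt^\beta} p_\beta^k(n,t)\big) u^n$. On the right, the term $\lambda\sum_{j=1}^{k} u^j \sum_{n\geq 0} p_\beta^k(n,t) u^n$ contributes, at order $u^n$, the sum $\lambda\sum_{j=1}^{k} p_\beta^k(n-j,t)$; using the convention $p_\beta^k(m,t)=0$ for $m<0$ this truncates to $\lambda\sum_{j=1}^{n\wedge k} p_\beta^k(n-j,t)$, which is precisely the claimed difference-differential equation. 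The initial conditions follow from $G_{N_\beta^k}(u,0)=M_{\beta,1}(0)=1$, forcing $p_\beta^k(0,0)=1$ and $p_\beta^k(n,0)=0$ for $n\geq 1$.

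The main obstacle is rigor in the two interchanges of limit operations: justifying the term-by-term application of the Caputo--Djrbashian derivative to the Mittag--Leffler series in $t$ (which needs the series and its formally differentiated series to converge locally uniformly, guaranteed by the entire-function growth of $M_{\beta,1}$), and justifying that $\frac{d^\beta}{dt^\beta}$ commutes with the power-series sum in $u$ so that coefficients may legitimately be matched. An alternative route that sidesteps the first interchange is to take the Laplace transform in $t$, use \eqref{Caputo_LT} together with the known transform $\mathcal{L}(M_{\beta,1}(-c t^\beta))(s) = s^{\beta-1}/(s^\beta + c)$, and then invert after matching coefficients of $u^n$; this also makes the $s^{\beta-1}g(0^{+})$ boundary term explicit and confirms the stated initial data.
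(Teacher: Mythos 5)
Your proposal is correct and is essentially the paper's argument: both reduce the claim to matching the generating function $\sum_n p_\beta^k(n,t)u^n$ (the paper's $z$-transform with $u=z^{-1}$) against $M_{\beta,1}\bigl(-k\lambda t^{\beta}(1-G_{X_1}(u))\bigr)$, using the fact that $1-G_{X_1}(u)=1-\tfrac{1}{k}\sum_{j=1}^{k}u^{j}$ encodes the difference operator on the right-hand side. The only cosmetic difference is that you obtain the eigenfunction relation $\tfrac{d^{\beta}}{dt^{\beta}}G=-k\lambda(1-G_{X_1}(u))G$ by term-by-term Caputo differentiation of the Mittag--Leffler series, whereas the paper routes through the Laplace transform identity \eqref{Caputo_LT} and $\mathcal{L}\bigl(M_{\beta,1}(-ut^{\beta})\bigr)=s^{\beta-1}/(u+s^{\beta})$ --- precisely the alternative you mention at the end.
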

\begin{proof}
 Using $z$-transform (see Debnath and Bhatta \cite{Debnath2014}) in both hand sides, leads to 
\begin{align*}
 \frac{d^\beta}{dt^\beta} \{ \mathcal{Z} p_{\beta}^k(n,t)\}= -k\lambda\left(1-\sum_{j=1}^{k}\frac{z^{-j}}{k}\right) \{ \mathcal{Z} p_{\beta}^k(n,t) \}.
\end{align*}
Next, using the Laplace transform of CD fractional derivative \eqref{Caputo_LT} with respect to the time variable $t$ and the condition $\mathcal{Z}\{p_{\beta}^{k}(n,0) \} =1$, it follows
\begin{align*}
 s^\beta \mathcal{L}[ \mathcal{Z} \{  p_{\beta}^k(n,t) \} ]- s^{\beta-1}= -k\lambda\left(1-G_{X_1}(z^{-1})\right)\mathcal{L}[\mathcal{Z}\{p_{\beta}^k(n,t)\}].
\end{align*}
Thus,
\begin{align*}
\mathcal{L}[\mathcal{Z}\{ p_{\beta}^k(n,t)\}] = \frac{s^{\beta-1}}{s^\beta -k\lambda\left(1-G_{X_1}(z^{-1})\right)}.
\end{align*}
Using the LT of Mittag-Leffler function $\mathcal{L}(M_{\beta,1}(- u t^{\beta})) = \frac{s^{\beta-1}}{u +s^{\beta}}$ (see e.g. Meerschaert and Sikorski \cite{Meerschaert2012}, p. 36), it follows
\begin{align*}
 \mathcal{Z}\{p_{\beta}^k(n,t)\}&= \mathcal{L}^{-1}\left\{\frac{s^{\beta-1}}{s^\beta-k\lambda\left(1-G_{X_1}(z^{-1})\right)}\right\}
  = M_{\beta, 1}(- k\lambda\left(1-G_{X_1}(z^{-1})\right)t^{\beta}),
\end{align*}
which is same as \eqref{pgf_tfppok} by putting $u=z^{-1}$ and hence the result.
\end{proof}

\begin{proposition}
The pmf of TFPPoK is given by
$$
p_{\beta}^k(n,t) = M_{\beta, 1}^{(n)}(-k \lambda t^{\beta})\sum_{X \in \Omega(k,n)} \frac{(\lambda t^{\beta})^{\zeta_{k}}}{\Pi_{k}!}.
$$
\end{proposition}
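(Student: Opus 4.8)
The plan is to read off the pmf as the coefficient of $u^n$ in the probability generating function \eqref{pgf_tfppok}, exactly as in the proof of Proposition \ref{pmf_pfok}. First I would insert the explicit form \eqref{pgf_disuni}, namely $G_{X_1}(u) = \frac1k\sum_{j=1}^k u^j$, so that $1 - G_{X_1}(u) = \frac1k\bigl(k - \sum_{j=1}^k u^j\bigr)$ and the factor $k\lambda$ absorbs the $\frac1k$, giving
\[
G_{N_\beta^k}(u,t) = M_{\beta,1}\!\left(-\lambda t^\beta\Bigl(k - \sum_{j=1}^k u^j\Bigr)\right).
\]
The task then reduces to expanding the right-hand side as a power series in $u$ and identifying $p_\beta^k(n,t) = [u^n]\,G_{N_\beta^k}(u,t)$.

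The key analytic step is to Taylor-expand the Mittag-Leffler function about the base value $-k\lambda t^\beta$. Writing the argument as $-k\lambda t^\beta + \lambda t^\beta\sum_{j=1}^k u^j$ and using the termwise expansion $M_{\beta,1}(z_0+h)=\sum_{r\ge0}\frac{h^r}{r!}M_{\beta,1}^{(r)}(z_0)$, which is justified because $M_{\beta,1}$ is entire, with $z_0=-k\lambda t^\beta$ and $h=\lambda t^\beta\sum_{j=1}^k u^j$, I would obtain
\[
G_{N_\beta^k}(u,t) = \sum_{r=0}^\infty \frac{(\lambda t^\beta)^r}{r!}\,M_{\beta,1}^{(r)}(-k\lambda t^\beta)\,\Bigl(\sum_{j=1}^k u^j\Bigr)^r.
\]
Next I would apply the multinomial theorem to $\bigl(\sum_{j=1}^k u^j\bigr)^r$, writing it as $\sum \frac{r!}{x_1!\cdots x_k!}\,u^{x_1+2x_2+\cdots+kx_k}$ over all $(x_1,\dots,x_k)$ with $x_1+\cdots+x_k=r$. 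Collecting the coefficient of $u^n$ imposes precisely the defining constraint $x_1+2x_2+\cdots+kx_k=n$ of $\Omega(k,n)$, while the relation $x_1+\cdots+x_k=\zeta_k=r$ ties the Taylor index $r$ to $\zeta_k$. The $r!$ factors cancel, and merging the sum over $r$ with the sum over $X$ yields $p_\beta^k(n,t)=\sum_{X\in\Omega(k,n)}M_{\beta,1}^{(\zeta_k)}(-k\lambda t^\beta)\frac{(\lambda t^\beta)^{\zeta_k}}{\Pi_k!}$.

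The main point to handle carefully — and the step I would watch most — is the \emph{order} of the Mittag-Leffler derivative: it is governed by $\zeta_k=x_1+\cdots+x_k$, the total number of summands, which lives \emph{inside} the sum over $\Omega(k,n)$ and need not equal $n$ once $k\ge2$. Only when $k=1$ does $\Omega(1,n)$ collapse to the single index $x_1=n$ with $\zeta_1=n$ and $\Pi_1!=n!$, recovering the compact form $p_\beta(n,t)=\frac{(\lambda t^\beta)^n}{n!}M_{\beta,1}^{(n)}(-\lambda t^\beta)$ of the stated expression. An equivalent and perhaps cleaner route is to compute $p_\beta^k(n,t)=\frac1{n!}\,\partial_u^n G_{N_\beta^k}(u,t)\big|_{u=0}$ via Fa\`a di Bruno's formula: since the inner function $g(u)=-\lambda t^\beta\bigl(k-\sum_j u^j\bigr)$ satisfies $g(0)=-k\lambda t^\beta$ and $g^{(i)}(0)/i!=\lambda t^\beta$ for $1\le i\le k$ (and $0$ otherwise), the partitions indexing the formula are exactly the elements of $\Omega(k,n)$ and the derivative order of the outer function is exactly $\zeta_k$. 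Either way, the crux is the combinatorial identification of $\Omega(k,n)$ with the index set of the expansion; the analytic input is only the entirety of $M_{\beta,1}$.
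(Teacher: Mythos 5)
Your derivation is sound and follows essentially the same route as the paper: both start from the pgf \eqref{pgf_tfppok}, expand it as a power series in $u$ (the paper phrases this as inverting the $z$-transform, i.e.\ extracting the coefficient of $z^{-n}$, which is the same operation with $u=z^{-1}$), and use the multinomial theorem to identify the index set $\Omega(k,n)$. The only analytic input --- the fact that $M_{\beta,1}$ is entire, which justifies the termwise Taylor expansion about $-k\lambda t^{\beta}$ --- is common to both arguments.

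The substantive issue is precisely the one you flagged. Your computation gives
\[
p_{\beta}^k(n,t)=\sum_{X\in\Omega(k,n)}M_{\beta,1}^{(\zeta_k)}(-k\lambda t^{\beta})\,\frac{(\lambda t^{\beta})^{\zeta_k}}{\Pi_k!},
\]
with the derivative order $\zeta_k$ varying \emph{inside} the sum, whereas the proposition (and the paper's own proof) places $M_{\beta,1}^{(n)}$ outside the sum. Your version is the correct one. A direct check: conditioning on $N_{\beta}(t,k\lambda)=m$, using the Beghin--Orsingher pmf $\frac{(k\lambda t^{\beta})^m}{m!}M_{\beta,1}^{(m)}(-k\lambda t^{\beta})$ together with $\mathbb{P}\left(\sum_{i=1}^m X_i=n\right)=k^{-m}\sum_{X\in\Omega(k,n),\,\zeta_k=m}m!/\Pi_k!$, reproduces exactly your formula. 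For $k\geq 2$ the set $\Omega(k,n)$ contains tuples with different values of $\zeta_k$ (e.g.\ $\Omega(2,2)=\{(2,0),(0,1)\}$ with $\zeta_2=2$ and $\zeta_2=1$ respectively), so $M^{(n)}$ cannot be factored out of the sum unless $\beta=1$, in which case every derivative of $e^{z}$ coincides and both expressions collapse to the PPoK pmf \eqref{pmf_ppok}. The paper's proof loses this because, when extracting the coefficient of $z^{-n}$ from $\sum_r \frac{(-k\lambda t^{\beta})^r(1-G_{X_1}(z^{-1}))^r}{\Gamma(1+r\beta)}$, it pins the derivative order at $n$ instead of letting it run over $\zeta_k$. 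So there is no gap in your argument; rather, your bookkeeping exposes an error in the stated proposition, which holds as written only for $k=1$ (or $\beta=1$).
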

\begin{proof} Note that
\begin{align*}
  \mathcal{Z}\{p_{\beta}^k(n,t)\}= \sum_{n=0}^{\infty}\frac{(-1)^n(k\lambda)^{n}t^{n\beta}(1-G_{X_1}(z^{-1}))^{n}}{\Gamma(1+n\beta)}.
 \end{align*}
 
To find $p_{\beta}^k(n,t)$, invert the $z$-transform, which is equivalent to finding the coefficient of $z^{-n}$, which leads to
 \begin{align}\label{pmf_tfppok}
     p_{\beta}^k(n,t) & = \sum_{m=0}^{\infty} \frac{(-k\lambda)^{m}t^{\beta m}}{m!}\frac{(n+m)!}{ \Gamma(1+r\beta(n+m))}\sum_{X\in\Omega(k,n)} \frac{(\lambda t^{\beta})^{\zeta_{k}}}{\Pi_{k}!}\nonumber\\
     & = M_{\beta, 1}^{(n)}(-k \lambda t^{\beta})\sum_{X\in \Omega(k,n)} \frac{(\lambda t^{\beta})^{\zeta_{k}}}{\Pi_{k}!},
 \end{align}
where $M_{\beta, 1} (z)$ is the Mittag-Leffler function \eqref{Mittag} evaluated at $z = -k\lambda t^{\beta}$, and 
$M_{\beta, 1}^{(n)}(z)$ is the nth derivative of $M_{\beta, 1} (z)$ evaluated at $z = -k\lambda t^{\beta}$.
\end{proof}

\begin{remark}
For $k=1$, equation \eqref{pmf_tfppok} reduces to the pmf of TFPP (see Beghin and Orsingher,\cite{Beghin2009})
\begin{equation*}
p^{\beta}(k,t) =\frac{(\lambda t^{\beta})^{n}}{n!} \sum_{m=0}^{\infty} \frac{(-\lambda)^{m}t^{\beta m}}{m!}\frac{(n+m)!}{ \Gamma(1+\beta(n+m))}.
\end{equation*}
\end{remark}
 \begin{remark}
For the particular case $\beta = 1$, the pmf of TFPPoK coincide to the PPoK given in equation \eqref{pmf_ppok}.
 \end{remark}
\begin{proposition}
Let $E_{\beta}(t),\; \beta \in(0,1)$, be a right-continuous inverse of stable subordinator and $N^{k}(t),\; t\geq0$, is PPoK with parameter $k\lambda>0$, independent of $E_{\beta}(t).$
Suppose
\begin{equation}
 Z_{\beta}(t) =N^{k}(E_{\beta}(t)),\;  0<\beta<1,
 \end{equation}
then $$ Z_{\beta}(t) \stackrel{d}{=} N^{k}_{\beta}(t).$$
\begin{proof} 
The pgf of $Z_{\beta}(t)$ is given by
$$G_{Z_{\beta}}(u,t) = \mathbb{E}[u^{Z_{\beta}(t)}]= M_{\beta, 1}(- k \lambda t ^{\beta}(1-G_{X_1}(u))$$
which is equal to the pgf of TFPPoK given in \eqref{pgf_tfppok}. Hence by uniqueness of pgf the result follows.
\end{proof}
\end{proposition}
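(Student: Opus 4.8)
The plan is to compute the probability generating function of $Z_\beta(t)$ directly and show it coincides with the pgf of the TFPPoK recorded in \eqref{pgf_tfppok}; uniqueness of the pgf for $\mathbb{Z}_{\geq 0}$-valued random variables then delivers the equality in distribution. First I would condition on the inverse stable subordinator. Since $N^k$ and $E_\beta$ are independent, the tower property gives
$$G_{Z_\beta}(u,t) = \mathbb{E}\left[u^{N^k(E_\beta(t))}\right] = \mathbb{E}\left[\mathbb{E}\left(u^{N^k(s)}\right)\big|_{s = E_\beta(t)}\right].$$

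The inner expectation is the pgf of the PPoK evaluated at time $s$. Recalling from \eqref{pgf_ppok} (equivalently, from the compound structure \eqref{ppok_def} together with the uniform pgf \eqref{pgf_disuni}) that $\mathbb{E}(u^{N^k(s)}) = e^{-k\lambda s(1 - G_{X_1}(u))}$, I would substitute to obtain
$$G_{Z_\beta}(u,t) = \mathbb{E}\left[e^{-k\lambda(1 - G_{X_1}(u))\, E_\beta(t)}\right].$$
The right-hand side is precisely the Laplace transform of $E_\beta(t)$ evaluated at the point $z = k\lambda(1 - G_{X_1}(u))$.

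The essential analytic input — and the one step I expect to be the main technical content, everything else being bookkeeping — is the classical identity
$$\mathbb{E}\left[e^{-z E_\beta(t)}\right] = M_{\beta,1}(-z t^\beta),$$
which expresses the Laplace transform of the inverse $\beta$-stable subordinator as a Mittag-Leffler function. I would justify this either by term-by-term integration of the density \eqref{pdf_inverse} against $e^{-zx}$ using the Wright-function series, or, more economically, by inverting the Laplace pair $\mathcal{L}(M_{\beta,1}(-u t^\beta))(s) = s^{\beta-1}/(u + s^\beta)$ stated earlier in the text (this is exactly the inversion already carried out in the proof of the preceding theorem). Note that since $G_{X_1}(u) \in [0,1]$ for $u \in (0,1)$, the argument $z$ is non-negative and the series converges, so no convergence subtleties arise.

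Finally, inserting $z = k\lambda(1 - G_{X_1}(u))$ into the Mittag-Leffler identity yields
$$G_{Z_\beta}(u,t) = M_{\beta,1}\!\left(-k\lambda t^\beta (1 - G_{X_1}(u))\right),$$
which is term-for-term the pgf of the TFPPoK in \eqref{pgf_tfppok}. Because the pgf determines the law of a non-negative integer-valued random variable uniquely, it follows that $Z_\beta(t) \stackrel{d}{=} N_\beta^k(t)$, completing the argument.
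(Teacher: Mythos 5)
Your proposal is correct and follows essentially the same route as the paper: compute the pgf of $Z_\beta(t)$, identify it with \eqref{pgf_tfppok}, and invoke uniqueness of the pgf. The paper simply asserts the pgf formula in one line, whereas you supply the intermediate steps (conditioning on $E_\beta(t)$ and the Mittag-Leffler Laplace transform of the inverse stable subordinator), which is a more complete write-up of the same argument.
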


\begin{proposition}\label{ID_fppok}
The marginal distributions of TFPPoK are not infinitely divisible.
\end{proposition}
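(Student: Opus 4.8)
The plan is to read off the marginal law of $N_\beta^k(t)$ as a mixed compound Poisson law and then transfer the known failure of infinite divisibility of the inverse stable subordinator to the process itself. Recall from the preceding proposition that, for each fixed $t>0$, $N_\beta^k(t)\stackrel{d}{=}N^k(E_\beta(t))$, where $N^k$ is the PPoK with rate $k\lambda$ and $E_\beta(t)$ is independent of it. Conditioning on $E_\beta(t)=s$ exhibits $N_\beta^k(t)$ as $\sum_{i=1}^{P}X_i$ with $P$ Poisson of random mean $\Theta:=k\lambda E_\beta(t)$ and the $X_i$ uniform on $\{1,\dots,k\}$; equivalently its probability generating function is $G_{N_\beta^k}(u,t)=\varphi_\Theta\bigl(1-G_{X_1}(u)\bigr)$, where $\varphi_\Theta(y)=\mathbb{E}[e^{-y\Theta}]=M_{\beta,1}(-k\lambda t^\beta y)$ is the Laplace transform of $\Theta$, consistent with \eqref{pgf_tfppok}. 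Thus $N_\beta^k(t)$ is a mixed compound Poisson law whose mixing variable is the positive multiple $\Theta$ of $E_\beta(t)$.

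The second step is to invoke the characterization of infinitely divisible mixed Poisson laws: a (compound) mixed Poisson distribution is infinitely divisible if and only if its mixing distribution is infinitely divisible (Maceda; see also the survey of Karlis and Xekalaki). The easy direction says that an infinitely divisible mixing law produces an infinitely divisible count; here we need the contrapositive of the converse, that a non-infinitely-divisible mixing law cannot yield an infinitely divisible count. By \cite{kumar_nane2018} the marginal law of $E_\beta(t)$ is not infinitely divisible for $\beta\in(0,1)$, and hence neither is $\Theta=k\lambda E_\beta(t)$. Combining the two facts gives that $N_\beta^k(t)$ is not infinitely divisible, as claimed; the case $k=1$ recovers the corresponding statement for the TFPP.

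The hard part is the ``only if'' direction in the compound setting, because compounding by $X_1$ replaces the argument $1-u$ of a plain mixed Poisson generating function by $1-G_{X_1}(u)$, and one must rule out the possibility that this composition manufactures infinite divisibility out of a mixing law that lacks it. Concretely, if $N_\beta^k(t)$ were infinitely divisible then $\bigl(\varphi_\Theta(1-G_{X_1}(u))\bigr)^{1/n}$ would be a probability generating function for every $n$, and the task is to descend from this to the statement that $\varphi_\Theta^{1/n}$ is a Laplace transform, which would force $\Theta$ to be infinitely divisible. Since $G_{X_1}\colon[0,1]\to[0,1]$ is an increasing analytic bijection with $G_{X_1}(0)=0$ and $\mathbb{P}(X_1=1)=1/k>0$, the route I would pursue is to substitute $y=1-G_{X_1}(u)$, pull the $n$-th roots back through $G_{X_1}^{-1}$, verify complete monotonicity of $y\mapsto\varphi_\Theta(y)^{1/n}$ on $[0,1]$, and extend to $[0,\infty)$ by analyticity of $\varphi_\Theta$.

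As an alternative, purely computational route one can use the canonical (Katti--Steutel) criterion: a law $\{p_n\}$ on $\mathbb{Z}_{\ge0}$ with $p_0>0$ is infinitely divisible if and only if the sequence $(\nu_j)_{j\ge1}$ determined by $n\,p_n=\sum_{j=1}^{n}j\,\nu_j\,p_{n-j}$ is non-negative. Here $p_n=M_{\beta,1}^{(n)}(-k\lambda t^\beta)\,c_n$ with $c_n=\sum_{X\in\Omega(k,n)}(\lambda t^\beta)^{\zeta_{k}}/\Pi_{k}!$ as computed in the preceding proposition, so the $\nu_j$ are explicit in $M_{\beta,1}$ and its derivatives, and it suffices to exhibit one index with $\nu_j<0$. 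The obstacle on this route is that the first coefficients are automatically non-negative: since $M_{\beta,1}(-x)$ is completely monotone, hence log-convex, one finds $\nu_1\ge0$ and $\nu_2=(2p_0p_2-p_1^2)/(2p_0^2)\ge0$, so the violation can appear only at higher order, where the alternating signs and the $t$-dependence of the successive derivatives of $M_{\beta,1}$ must be controlled.
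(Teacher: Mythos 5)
Your reduction to a mixed compound Poisson law is fine, and the subordination representation $N_\beta^k(t)\stackrel{d}{=}N^k(E_\beta(t))$ with pgf $\varphi_\Theta(1-G_{X_1}(u))$ matches the paper. The gap is in the lemma you then lean on: ``a mixed Poisson distribution is infinitely divisible if and only if its mixing distribution is infinitely divisible'' is not a theorem. Maceda's result is one-directional (ID mixing $\Rightarrow$ ID mixture); the converse is known to be \emph{false} already for $k=1$, with no compounding involved: Puri and Goldie (J.~Appl.~Probab.~16, 1979) show that the class of mixing laws whose Poisson mixtures are infinitely divisible strictly contains the infinitely divisible laws. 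So ``$E_\beta(t)$ is not ID, hence the mixture is not ID'' does not follow, and this is precisely the direction your argument needs. Your two attempts to supply it are both explicitly left open: in the analytic route, absolute monotonicity of $\bigl(\varphi_\Theta(1-G_{X_1}(u))\bigr)^{1/n}$ in $u$ does not transfer to complete monotonicity of $\varphi_\Theta^{1/n}$ in $y$ after composing with $G_{X_1}^{-1}$ (the inverse of a pgf is not absolutely monotone), and complete monotonicity established only on the bounded interval $[0,1]$ does not extend to $[0,\infty)$ ``by analyticity'' --- the sign pattern of the derivatives is not an analytic-continuation invariant; in the Katti--Steutel route you show only that the first two canonical coefficients are automatically nonnegative and leave the required negative coefficient unexhibited. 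As it stands, nothing in the proposal rules out that this particular non-ID mixing law produces an ID count.

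For comparison, the paper sidesteps fixed-$t$ analysis entirely and argues by a scaling limit: writing $N_\beta^k(t)\stackrel{d}{=}\sum_{j=1}^{N(t^\beta E_\beta(1))}X_j$, the strong law for random sums and the renewal theorem give $t^{-\beta}N_\beta^k(t)\Rightarrow k\lambda\,\mathbb{E}[X_1]\,E_\beta(1)$ as $t\to\infty$. Since infinite divisibility is preserved under deterministic scaling and under weak limits, infinite divisibility of the marginals would force $E_\beta(1)$ to be infinitely divisible, contradicting Kumar and Nane \cite{kumar_nane2018}. The limit theorem is exactly the device that converts non-ID of the mixing variable into non-ID of the counts, which is the step your fixed-$t$ argument cannot supply. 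If you want to repair your approach, you would need either such a limit argument or a direct computation exhibiting a negative Katti--Steutel coefficient for the specific Mittag-Leffler mixture, not an appeal to a converse of Maceda's theorem.
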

\begin{proof}
Note that $N_{\beta}(t,k\lambda) \stackrel{d}= N(t^{\beta}E_{\beta}(1))$, where $N(\cdot)$ is Poisson process with parameter $k\lambda.$ Thus
$$
\displaystyle N_{\beta}^k(t) \stackrel{d} = \sum_{j=1}^{N(t^{\beta}E_{\beta}(1))} X_j. 
$$
We have 
\begin{align*}
\displaystyle \frac{\sum_{j=1}^{N(t^{\beta}E_{\beta}(1))}X_j}{t^{\beta}} = \frac{\sum_{j=1}^{N(t^{\beta}E_{\beta}(1))}X_j}{N(t^{\beta}E_{\beta}(1))}\; \frac{N(t^{\beta}E_{\beta}(1))}{t^{\beta}E_{\beta}(1)}\;E_{\beta}(1).
\end{align*}
Using (Th. 3.1.5, p. 81, Mikosch \cite{Mikosch2009}), the first term converges to $\mathbb{E}X_1$ almost surely (a.s) as $t\rightarrow\infty.$ By an application of the renewal theorem, the second term converges to $k\lambda$ almost surely. Thus
$$
\displaystyle \lim_{t\rightarrow\infty}\frac{\sum_{j=1}^{N(t^{\beta}E_{\beta}(1))}X_j}{t^{\beta}} \stackrel{a.s.} \rightarrow k\lambda \mathbb{E}(X_1)E_{\beta}(1).
$$
Hence, 
$$
\displaystyle \lim_{t\rightarrow\infty}\frac{\sum_{j=1}^{N(t^{\beta}E_{\beta}(1))}X_j}{t^{\beta}}  \rightarrow k\lambda \mathbb{E}(X_1)E_{\beta}(1)\;\; \textrm{in distribution}.
$$
This further implies that
$$
\frac{N_{\beta}^k(t)}{t^{\beta}} = \frac{\sum_{j=1}^{N_{\beta}(t, k\lambda)}X_j}{t^{\beta}} \stackrel{d} = \frac{\sum_{j=1}^{N(t^{\beta}E_{\beta}(1))}X_j}{t^{\beta}} \stackrel{d}\rightarrow k\lambda \mathbb{E}(X_1)E_{\beta}(1), \; \textrm{as}\;t\rightarrow \infty.
$$
It is known that the distribution of $E_{\beta}(1)$ is not infinitely divisible (see Kumar and Nane \cite{kumar_nane2018}). Suppose that $N_{\beta}^k(t)$ has infinitely divisible distribution then $\frac{N_{\beta}^k(t)}{t^{\beta}}$ will also have an infinitely divisible distribution (see, e.g., Steutel and Van Harn \cite{Steutel2004}, Prop. 2.1, p. 94). It is known that the limit in distribution of a sequence of random variables with infinitely divisible distributions has an infinitely divisible distribution (see, e.g., Steutel and Van Harn \cite{Steutel2004}, Sato \cite{Sato1999}), we have that the distribution of $E_{\beta}(1)$ is infinitely divisible, and which is a contradiction. Hence the marginal distributions of $N_{\beta}^k(t)$ are not infinitely divisible.
\end{proof}

\section{Space-fractional Poisson process of order k (SFPPoK)}
In this section, we study the SFPPoK and its properties.
\begin{definition}
Let $N^{\alpha}(t, k\lambda)$ be the SFPP and $X_i,\;i=1,2,\cdots,$ be the iid discrete uniform random variables such that $\mathbb{P}(X_i = j) = \frac{1}{k},\; j=1,2,\cdots, k$. Then the process defined by
\begin{equation}\label{fppok}
R_{\alpha}^k(t) = \sum_{i=1}^{N^{\alpha}(t, k\lambda)} X_i,
\end{equation}
is called the SFPPoK.
\end{definition}
\noindent The pgf of SFPPoK $R_{\alpha}^k(t)$ is obtained by using equations \eqref{pgf_disuni} and \eqref{pgfsfpp}
\begin{equation}\label{pgf_sfppok}
    G_{R_{\alpha}^k}(u,t) = e^{-k^{\alpha} \lambda^{\alpha} t(1-G_{X_1}(u))^{\alpha}} ,\; u \in (0, 1).
\end{equation}

\begin{proposition} The alternative characterization of SFPPoK is defined in term of time-changed PPoK such that
$$
R^{\alpha}(t) = N^{k}(S_{\alpha}(t)).
$$
\end{proposition}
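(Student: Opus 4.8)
The plan is to establish the identity in distribution by comparing probability generating functions, exactly in the spirit of the preceding subordination proposition for the time-fractional analogue. Concretely, I would show that the pgf of the time-changed PPoK $N^{k}(S_{\alpha}(t))$ coincides with the pgf of $R_{\alpha}^k(t)$ recorded in \eqref{pgf_sfppok}, and then invoke the uniqueness of the pgf to conclude that $N^{k}(S_{\alpha}(t)) \stackrel{d}{=} R_{\alpha}^k(t)$.

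First I would rewrite the pgf of the PPoK in \eqref{pgf_ppok} in terms of the pgf $G_{X_1}$ of the discrete uniform summand. Since $G_{X_1}(u) = \frac{1}{k}\sum_{j=1}^{k} u^{j}$ by \eqref{pgf_disuni}, one has $k - \sum_{j=1}^{k} u^{j} = k(1 - G_{X_1}(u))$, so that
$$
G_{N^{k}}(u,s) = e^{-k\lambda s(1 - G_{X_1}(u))}.
$$
Next I would condition on the value of the stable subordinator. Using the assumed independence of $N^{k}$ and $S_{\alpha}$,
$$
\mathbb{E}\left[u^{N^{k}(S_{\alpha}(t))}\right] = \mathbb{E}\left[G_{N^{k}}(u, S_{\alpha}(t))\right] = \mathbb{E}\left[e^{-k\lambda(1 - G_{X_1}(u)) S_{\alpha}(t)}\right].
$$

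Finally, applying the Laplace transform of the stable subordinator, $\mathbb{E}(e^{-z S_{\alpha}(t)}) = e^{-tz^{\alpha}}$, with the choice $z = k\lambda(1 - G_{X_1}(u)) \geq 0$ for $u \in (0,1)$, yields
$$
\mathbb{E}\left[u^{N^{k}(S_{\alpha}(t))}\right] = e^{-t \, k^{\alpha}\lambda^{\alpha}(1 - G_{X_1}(u))^{\alpha}},
$$
which is precisely $G_{R_{\alpha}^k}(u,t)$ as given in \eqref{pgf_sfppok}. The claimed equality in distribution then follows from the uniqueness of the pgf.

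The computation is entirely routine, so there is no genuine obstacle; the only points requiring a little care are the algebraic step recognizing that $k(1 - G_{X_1}(u))$ is exactly the exponent in the PPoK pgf, and verifying that $z = k\lambda(1 - G_{X_1}(u))$ is nonnegative on the stated domain $u \in (0,1)$ so that the subordinator Laplace transform applies verbatim.
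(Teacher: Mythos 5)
Your proof takes essentially the same route as the paper: both establish the claim by matching the pgf of $N^{k}(S_{\alpha}(t))$ against \eqref{pgf_sfppok}, and you merely carry out explicitly (via conditioning on $S_{\alpha}(t)$ and its Laplace transform) the computation the paper asserts in one line. The only difference is that the paper additionally remarks that both processes are L\'evy, which upgrades your equality of one-dimensional marginals to equivalence of the processes in finite-dimensional distributions; if the proposition is read as an identity of processes rather than of marginals at a fixed $t$, you should append that observation.
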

\begin{proof}
We have
$$\mathbb{E}[u^{N^{k}(S_{\alpha}(t))}] = e^{-k^{\alpha} \lambda^{\alpha} t\left(1-\frac{1}{k}\sum_{j=1}^{ k}u^{j}\right)^{\alpha}},
$$
which is the same as the pgf of SFPPoK $R^{\alpha}_{k}(t)$. Since SFPPoK and $N^{k}(S_{\alpha}(t))$ both are L\'evy processes, the equivalence follows.
\end{proof}

\begin{theorem}
The pmf $q^{\alpha, k}(n, t) = \mathbb{P}(R^{k}_{\alpha}(t) = n)$ of SFPPoK satisfies the following differential-difference equations 
\begin{align}
 \frac{d}{dt}q^{\alpha, k}(n, t)  &= -k^{\alpha} \lambda^{\alpha}\left(1-\frac{1}{k}\sum_{j=1}^{n \wedge k} B^{n-j}\right)^{\alpha} q^{\alpha, k}(n, t) ,\;\; n=0,1,2,\ldots,
\end{align}
with initial conditions $q^{\alpha, k}(0,0)  = 1$ and $q^{\alpha, k}(n, 0) = 0$.
\end{theorem}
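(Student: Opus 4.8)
The plan is to mirror the $z$-transform computation used for the TFPPoK, but with a single time derivative in place of the Caputo derivative, since the governing equation for the SFPPoK is local in time. Writing $\mathcal{Z}$ for the $z$-transform in the spatial variable $n$, the transform of the pmf is exactly the pgf evaluated at $u=z^{-1}$, so by \eqref{pgf_sfppok},
$$
\mathcal{Z}\{q^{\alpha,k}(n,t)\}=G_{R^k_\alpha}(z^{-1},t)=e^{-k^{\alpha}\lambda^{\alpha}t\,(1-G_{X_1}(z^{-1}))^{\alpha}}.
$$
Differentiating in $t$ (the exponential is smooth in $t$, so term-by-term differentiation of the transformed series is legitimate) yields the first-order relation
$$
\frac{d}{dt}\mathcal{Z}\{q^{\alpha,k}(n,t)\}=-k^{\alpha}\lambda^{\alpha}\,\bigl(1-G_{X_1}(z^{-1})\bigr)^{\alpha}\,\mathcal{Z}\{q^{\alpha,k}(n,t)\}.
$$

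Next I would turn the symbol $(1-G_{X_1}(z^{-1}))^{\alpha}$ into a difference operator. By \eqref{pgf_disuni}, $G_{X_1}(z^{-1})=\frac1k\sum_{j=1}^{k}z^{-j}$, and since $z^{-j}$ is the symbol of the backward shift $B^{j}$, the generalized binomial series that defines $(1-B)^{\alpha}$ in \eqref{SFPP} identifies $(1-G_{X_1}(z^{-1}))^{\alpha}$ as the symbol of the operator $\bigl(1-\frac1k\sum_{j=1}^{k}B^{j}\bigr)^{\alpha}$. Inverting the $z$-transform, that is, reading off the coefficient of $z^{-n}$, converts multiplication by this symbol into the action of the operator on the sequence $q^{\alpha,k}(\cdot,t)$, producing
$$
\frac{d}{dt}q^{\alpha,k}(n,t)=-k^{\alpha}\lambda^{\alpha}\Bigl(1-\tfrac1k\sum_{j=1}^{k}B^{j}\Bigr)^{\alpha}q^{\alpha,k}(n,t).
$$
Because $q^{\alpha,k}(m,t)=0$ for $m<0$, every shift $B^{j}$ with $j>n$ annihilates $q^{\alpha,k}(n,t)$, so the inner sum may be truncated at $j=n\wedge k$, which is the form stated in the theorem. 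The initial conditions $q^{\alpha,k}(0,0)=1$ and $q^{\alpha,k}(n,0)=0$ for $n\ge 1$ follow immediately from $R^k_\alpha(0)=0$, the empty sum in \eqref{fppok}.

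The step I expect to require the most care is the passage from the symbol $(1-G_{X_1}(z^{-1}))^{\alpha}$ to the operator: one must justify the generalized binomial (fractional multinomial) expansion of the $\alpha$-th power, verify that the resulting series of shifts converges when applied to the summable pmf sequence, and confirm that collecting the coefficient of $z^{-n}$ commutes with this expansion. Once this termwise correspondence is established, exactly as for the fractional difference operator $(1-B)^{\alpha}$ appearing in \eqref{SFPP}, the remainder is bookkeeping and the truncation to $n\wedge k$ is automatic. An alternative route, should the direct expansion prove delicate, is to use the subordination representation $R^{\alpha}(t)=N^{k}(S_{\alpha}(t))$ together with the known governing equation \eqref{SFPP} of the SFPP, transferring the fractional difference structure through the compounding by the discrete uniform jumps.
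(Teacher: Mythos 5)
Your proposal is correct and follows essentially the same route as the paper: both arguments rest on the identification $\mathcal{Z}\{q^{\alpha,k}(n,t)\}=G_{R^k_\alpha}(z^{-1},t)$ and on reading $\bigl(1-\tfrac1k\sum_{j=1}^{k}z^{-j}\bigr)^{\alpha}$ as the symbol of the fractional difference operator, the only difference being one of direction --- you differentiate the known pgf in $t$ and invert the $z$-transform to \emph{derive} the equation, whereas the paper transforms the claimed equation, solves the resulting first-order ODE, and \emph{verifies} that the solution matches the pgf \eqref{pgf_sfppok}. Note also that the operator you obtain, $\bigl(1-\tfrac1k\sum_{j=1}^{k}B^{j}\bigr)^{\alpha}$ truncated at $j=n\wedge k$, is the one consistent with the paper's own computation (which uses $z^{-j}$); the exponent $B^{n-j}$ appearing in the theorem statement is evidently a typo.
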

\begin{proof}

\noindent Taking the $z$-transform in both sides, where $\{\mathcal{Z}{q^{\alpha, k}(n, t) }\}$ is the $z$-transform of $q^{\alpha, k}(n, t)$, it follows
\begin{align*}
 \frac{d}{dt}\{\mathcal{Z}{q^{\alpha, k}(n, t) }\} & = -k ^{\alpha} \lambda^\alpha {\left(1-\frac{1}{k}\sum_{j=1}^{k} z^{-j}\right)^\alpha\mathcal{Z} \{q^{\alpha, k}(n, t) \}}].
\end{align*}
Solving the above equation for $\mathcal{Z}{q^{\alpha, k}(n, t) }$ and using initial conditions, leads to
\begin{align*} 
 \{\mathcal{Z}{q^{\alpha, k}(n, t) }\} = \sum_{r=0}^{\infty} \frac{(-1)^{r} (k \lambda)^{\alpha r} t^{r}}{r!} \left(1-\frac{1}{k}\sum_{j=1}^{ k} z^{-j}\right)^{\alpha r}.
\end{align*}
Thus
\begin{align*}
 \{\mathcal{Z}{q^{\alpha, k}(n, t) }\}=\sum_{r=0}^{\infty} \frac{(-1)^{r} (k \lambda)^{\alpha r} t^{r}}{r!} \sum_{n=0}^{\infty} {\alpha r \choose n} \frac{1}{k^{n}}\left(\sum_{j=1}^{ k}z^{-j} \right)^{n},
\end{align*}
which matches with equation \eqref{pgf_sfppok} and hence proved.
\end{proof}

\begin{corollary}
Taking inverse $z$-transform, we get the pmf of SFPPoK
\begin{align}
 q^{\alpha, k}(n, t)=\sum_{r=0}^{\infty} \frac{(-1)^{r+n} (k \lambda)^{\alpha r}t^{r}}{ r!} \sum_{X \in \Omega(k,n)}\frac{\Gamma(\alpha r +1)}{\Gamma(\alpha r -\zeta_{k}+1)} \frac{1}{\Pi_{k}!} \frac{1}{k^{\zeta_{k}}}.
\end{align}
\end{corollary}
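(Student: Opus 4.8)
The plan is to start from the closed form of the $z$-transform obtained in the preceding theorem, namely
\[
\mathcal{Z}\{q^{\alpha,k}(n,t)\}=\sum_{r=0}^{\infty}\frac{(-1)^{r}(k\lambda)^{\alpha r}t^{r}}{r!}\left(1-\frac{1}{k}\sum_{j=1}^{k}z^{-j}\right)^{\alpha r},
\]
and to read off $q^{\alpha,k}(n,t)$ as the coefficient of $z^{-n}$ in this power series in $z^{-1}$. First I would treat $w:=\frac{1}{k}\sum_{j=1}^{k}z^{-j}$ as a single variable with $|w|<1$ and expand each factor $(1-w)^{\alpha r}$ by the generalized (Newton) binomial series $(1-w)^{\alpha r}=\sum_{m=0}^{\infty}\binom{\alpha r}{m}(-1)^{m}w^{m}$, which is legitimate precisely because $\alpha r$ is in general non-integer. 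This reproduces the intermediate expansion already displayed in the theorem.

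The second step is to resolve the power $w^{m}=k^{-m}\bigl(\sum_{j=1}^{k}z^{-j}\bigr)^{m}$ by the multinomial theorem,
\[
\Bigl(\sum_{j=1}^{k}z^{-j}\Bigr)^{m}=\sum_{x_{1}+\cdots+x_{k}=m}\frac{m!}{x_{1}!\cdots x_{k}!}\,z^{-(x_{1}+2x_{2}+\cdots+kx_{k})}.
\]
Here the exponent of $z^{-1}$ equals $n=x_{1}+2x_{2}+\cdots+kx_{k}$ and the number of factors equals $m=x_{1}+\cdots+x_{k}=\zeta_{k}$, so the multi-indices that contribute a given power $z^{-n}$ are exactly those in $\Omega(k,n)$, and each such index forces $m=\zeta_{k}$. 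Extracting the coefficient of $z^{-n}$ therefore collapses the $m$-sum and leaves a single sum over $X\in\Omega(k,n)$ nested inside the $r$-sum.

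The final step is cosmetic simplification. Substituting $m=\zeta_{k}$ produces the factor $\binom{\alpha r}{\zeta_{k}}(-1)^{\zeta_{k}}k^{-\zeta_{k}}\frac{\zeta_{k}!}{\Pi_{k}!}$, and rewriting $\binom{\alpha r}{\zeta_{k}}=\frac{\Gamma(\alpha r+1)}{\zeta_{k}!\,\Gamma(\alpha r-\zeta_{k}+1)}$ cancels the $\zeta_{k}!$ and yields the claimed ratio $\frac{\Gamma(\alpha r+1)}{\Gamma(\alpha r-\zeta_{k}+1)}\frac{1}{\Pi_{k}!}\frac{1}{k^{\zeta_{k}}}$, while the outer $(-1)^{r}$ combines with the sign coming from the binomial expansion.

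I expect the main obstacle to be the bookkeeping and justification behind interchanging the three nested summations (the series in $r$, the generalized binomial series in $m$, and the multinomial sum), which is what permits extracting the coefficient of $z^{-n}$ term by term; convergence of the Newton series requires $|w|<1$, and one must track multi-indices carefully so that only $X\in\Omega(k,n)$ survive. One should also reconcile the overall sign: the natural derivation yields $(-1)^{r+\zeta_{k}}$ rather than the $(-1)^{r+n}$ appearing in the displayed statement, and these agree only up to the parity of $n-\zeta_{k}=\sum_{i=1}^{k}(i-1)x_{i}$, so the sign in the formula is most safely read as $(-1)^{r+\zeta_{k}}$.
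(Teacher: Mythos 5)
Your proposal is correct and follows essentially the same route as the paper: the corollary is obtained by reading off the coefficient of $z^{-n}$ from the series for $\mathcal{Z}\{q^{\alpha,k}(n,t)\}$ derived in the preceding theorem, via the generalized binomial expansion of $\bigl(1-\tfrac{1}{k}\sum_{j=1}^{k}z^{-j}\bigr)^{\alpha r}$ followed by the multinomial theorem, exactly as you do. Your closing remark about the sign is a genuine catch rather than an obstacle: the derivation produces $(-1)^{\zeta_k}$, which depends on $X$ and so belongs inside the sum over $\Omega(k,n)$, and it coincides with $(-1)^{n}$ only when $n-\zeta_k=\sum_{i=1}^{k}(i-1)x_i$ is even (always true for $k=1$, but not for $k>1$, e.g.\ $X=(0,1)\in\Omega(2,2)$), so the displayed $(-1)^{r+n}$ in the paper should indeed read $(-1)^{r+\zeta_k}$.
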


\noindent Next, we give an alternative form of marginal pmf of SFPPoK, using a similar argument given in Orsingher et al. \cite{Orsingher2015}.
\begin{corollary}
The marginal pmf of SFPPoK can also be written as 
\begin{align*}
     q^{\alpha, k}(n, t) &= \sum_{r=0}^{\infty} \frac{(-1)^{r+n} (k \lambda)^{\alpha r}t^{r}}{ r!} \sum_{X\in \Omega(k,n)}\frac{\Gamma(\alpha r +1)}{\Gamma(\alpha r -\zeta_{k}+1)} \frac{1}{\Pi_{k}!} \frac{1}{k^{\zeta_{k}}}\\
     &= \sum_{X \in \Omega(k,n)}\frac{(-1)^{n}}{\Pi_{k}! k^{\zeta_{k}} }\sum_{r=0}^{\infty} \frac{(-k^{\alpha} \lambda^{\alpha} t)^{r}}{r!} (\alpha r)(\alpha r-1)\cdots(\alpha r-\zeta_{k} +1)\\
     &= \sum_{X\in \Omega(k,n)}\frac{(-1)^{n}}{\Pi_{k}! k^{\zeta_{k}} } \frac{d^{\zeta_k}}{du^{\zeta_k}} e^{-t k^{\alpha} \lambda^{\alpha} u^{\alpha} } |_{u=1}.
\end{align*}
\end{corollary}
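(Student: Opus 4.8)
The plan is to start from the marginal pmf expression established in the preceding corollary, namely
\[
q^{\alpha, k}(n, t) = \sum_{r=0}^{\infty} \frac{(-1)^{r+n} (k \lambda)^{\alpha r}t^{r}}{ r!} \sum_{X\in \Omega(k,n)}\frac{\Gamma(\alpha r +1)}{\Gamma(\alpha r -\zeta_{k}+1)} \frac{1}{\Pi_{k}!} \frac{1}{k^{\zeta_{k}}},
\]
and to manipulate it into a closed form involving a single derivative of an exponential. Since $\Omega(k,n)$ is a finite set and the $r$-series converges absolutely, I would first interchange the two summations and pull the $r$-independent factors $(-1)^{n}/(\Pi_{k}!\,k^{\zeta_{k}})$ out of the inner sum. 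Collecting the remaining signs and powers through $(-1)^{r}(k\lambda)^{\alpha r}t^{r} = (-k^{\alpha}\lambda^{\alpha}t)^{r}$ then produces the middle displayed line, once the Gamma-ratio is rewritten as a falling factorial.

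The key algebraic identity is
\[
\frac{\Gamma(\alpha r +1)}{\Gamma(\alpha r -\zeta_{k}+1)} = (\alpha r)(\alpha r-1)\cdots(\alpha r-\zeta_{k} +1),
\]
which follows by applying the functional equation $\Gamma(z+1)=z\,\Gamma(z)$ exactly $\zeta_{k}$ times. Here $\zeta_{k}=x_{1}+\cdots+x_{k}$ is a non-negative integer for every $X\in\Omega(k,n)$, so the product has an integer number of factors (and reduces to the empty product $1$ when $\zeta_{k}=0$, i.e.\ $n=0$). Substituting this identity yields precisely the second line of the statement.

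For the final equality I would recognise the inner series as a $\zeta_{k}$-th derivative of an exponential. Writing
\[
e^{-t k^{\alpha}\lambda^{\alpha} u^{\alpha}} = \sum_{r=0}^{\infty}\frac{(-t k^{\alpha} \lambda^{\alpha})^{r}}{r!}\,u^{\alpha r},
\]
and differentiating $\zeta_{k}$ times in $u$ sends $u^{\alpha r}$ to $(\alpha r)(\alpha r-1)\cdots(\alpha r-\zeta_{k}+1)\,u^{\alpha r - \zeta_{k}}$; evaluating at $u=1$ reproduces exactly the inner sum of the second line, and the corollary follows.

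The main obstacle is justifying the term-by-term differentiation. For non-integer $\alpha$ the map $u\mapsto u^{\alpha}$ is only smooth on $(0,\infty)$, so I would restrict attention to a compact neighbourhood of $u=1$ contained in $(0,\infty)$, say $[\tfrac12,\tfrac32]$. There the factor $1/r!$ dominates the polynomial-in-$r$ growth of the falling factorial and the bounded factor $u^{\alpha r-\zeta_{k}}$, furnishing a convergent numerical majorant; hence the differentiated series converges uniformly on this neighbourhood and the interchange of summation and differentiation is legitimate. With that justification in place, the three expressions coincide.
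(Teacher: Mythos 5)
Your proposal is correct and follows essentially the same route as the paper, which simply displays this chain of equalities (citing Orsingher and Toaldo) without further justification: rewrite the Gamma ratio as the falling factorial $(\alpha r)(\alpha r-1)\cdots(\alpha r-\zeta_k+1)$, swap the finite sum over $\Omega(k,n)$ with the $r$-series, and recognise the result as $\frac{d^{\zeta_k}}{du^{\zeta_k}}e^{-tk^{\alpha}\lambda^{\alpha}u^{\alpha}}\big|_{u=1}$. The only addition on your part is the uniform-convergence argument legitimising term-by-term differentiation near $u=1$, which the paper leaves implicit; that is a welcome, not a divergent, refinement.
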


\begin{remark}
Using \cite{Orsingher2015}, for SFPPoK, the probability of jumps in an infinitesimal interval is given by 
\begin{align}
  \mathbb{P}(R^{k}_{\alpha}[t, t+dt) = l) = 
      \begin{cases}
             dt\sum_{X\in \Omega(k,l)}\frac{{\lambda}^{\zeta_{k}}}{\Pi_{k}!} \int_{0}^{\infty} e^{-\lambda k s} s^{\zeta_{k}} \nu(ds) +o(dt), & l\geq 1,\\
                  1-k^{\alpha} \lambda^{\alpha}dt +o(dt) , & l=0.
           \end{cases}
\end{align}
where $\nu(ds) =  \frac{\alpha s^{-\alpha -1}}{\Gamma(1-\alpha)}ds$ is the L\'evy measure of stable subordinator.
\end{remark}

\noindent The following result follows in similar lines of Orsingher et al. \cite{Orsingher2015}.

\begin{proposition}
Let $T_{l}^{\alpha} (t)$ be the first passage time at $l$ of the SFPPoK
$$ T_{l}^{\alpha} (t) = \inf \{t \geq 0 : R^{k}_{\alpha} (t) =l\},\; \; l\geq 0.$$
Then
$$
\mathbb{P}(T_{l}^{\alpha} \in dt)  = \frac{d}{dt} \sum_{j=l}^{\infty}\sum_{X \in \Omega(k,j)}\frac{(-\lambda)^{\zeta_k}}{\Pi_{k}! k^{\zeta_{k}}} \frac{d^{\zeta_k}}{d \lambda^{\zeta_{k}}} e^{-t k^{\alpha}\lambda^{\alpha}}.
$$
\end{proposition}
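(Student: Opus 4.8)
The plan is to use the fact that $R^{k}_{\alpha}(t)=N^{k}(S_{\alpha}(t))$ has non-decreasing, right-continuous sample paths: $N^{k}$ is a compound Poisson process with strictly positive integer jumps and $S_{\alpha}$ is a stable subordinator, so their composition starts at $0$ and never decreases. Interpreting $T^{\alpha}_{l}$ as the first time the process reaches level $l$, monotonicity and right-continuity give the level-crossing identity $\{T^{\alpha}_{l}\le t\}=\{R^{k}_{\alpha}(t)\ge l\}$ for $l\ge 1$ (while $T^{\alpha}_{0}=0$). Consequently the distribution function of $T^{\alpha}_{l}$ is $\mathbb{P}(T^{\alpha}_{l}\le t)=\mathbb{P}(R^{k}_{\alpha}(t)\ge l)=\sum_{j=l}^{\infty}q^{\alpha,k}(j,t)$, and the density is obtained by differentiating in $t$, so $\mathbb{P}(T^{\alpha}_{l}\in dt)=\frac{d}{dt}\sum_{j=l}^{\infty}q^{\alpha,k}(j,t)\,dt$.

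First I would substitute the closed form of the marginal pmf from the preceding corollary, $q^{\alpha,k}(j,t)=\sum_{X\in\Omega(k,j)}\frac{(-1)^{j}}{\Pi_{k}!\,k^{\zeta_{k}}}\frac{d^{\zeta_{k}}}{du^{\zeta_{k}}}e^{-tk^{\alpha}\lambda^{\alpha}u^{\alpha}}\big|_{u=1}$, into this tail. The only algebraic step is to convert the $u$-derivative at $u=1$ into a $\lambda$-derivative. The key observation is the scaling relation $\lambda^{\alpha}u^{\alpha}=(\lambda u)^{\alpha}$: writing $g(\lambda)=e^{-tk^{\alpha}\lambda^{\alpha}}$ and applying the chain rule yields $\frac{d^{\zeta_{k}}}{du^{\zeta_{k}}}g(\lambda u)\big|_{u=1}=\lambda^{\zeta_{k}}\,\frac{d^{\zeta_{k}}}{d\lambda^{\zeta_{k}}}e^{-tk^{\alpha}\lambda^{\alpha}}$. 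Absorbing the factor $\lambda^{\zeta_{k}}$ into the coefficient recasts each summand as $\frac{(-\lambda)^{\zeta_{k}}}{\Pi_{k}!\,k^{\zeta_{k}}}\frac{d^{\zeta_{k}}}{d\lambda^{\zeta_{k}}}e^{-tk^{\alpha}\lambda^{\alpha}}$, and pulling the $t$-derivative back out in front of the double sum gives exactly the stated expression. This mirrors the derivation of Orsingher et al. \cite{Orsingher2015} for the ordinary SFPP, here specialized through the order-$k$ jump structure encoded by $\Omega(k,j)$.

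The main difficulty is analytic rather than algebraic, and it is fairly mild. Existence and smoothness of the density are immediate: since $\sum_{j=l}^{\infty}q^{\alpha,k}(j,t)=1-\sum_{j=0}^{l-1}q^{\alpha,k}(j,t)$ is $1$ minus a finite sum of functions that are analytic in $t$ (each $q^{\alpha,k}(j,\cdot)$ has an absolutely convergent power-series representation in $t$ by the corollary), the distribution function is differentiable and the density is well defined. What still requires care is the commutation of $\frac{d}{dt}$ with the infinite $j$-sum as displayed in the statement; I would justify this by bounding the tail and its formal termwise $t$-derivative uniformly on compact $t$-intervals and invoking dominated convergence. A secondary, routine point is to confirm the level-crossing identity under the convention that $T^{\alpha}_{l}$ denotes first passage to level $\ge l$ (rather than exact attainment of $l$), which is the correct reading for a pure-jump process whose jumps may overshoot $l$.
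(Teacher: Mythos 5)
Your overall architecture coincides with the paper's: both arguments rest on the level-crossing identity $\mathbb{P}(T^{\alpha}_{l}<t)=\mathbb{P}(R^{k}_{\alpha}(t)\ge l)$ for the non-decreasing pure-jump process $R^{k}_{\alpha}$, followed by differentiation of the tail sum in $t$. Where you diverge is in how the summands are put into $\lambda$-derivative form. The paper conditions on $S_{\alpha}(t)=x$, writes the tail as $\sum_{j\ge l}\int_{0}^{\infty}\sum_{X\in\Omega(k,j)}e^{-k\lambda x}\frac{(\lambda x)^{\zeta_{k}}}{\Pi_{k}!}\,\mathbb{P}(S_{\alpha}(t)\in dx)$, uses the pointwise identity $e^{-k\lambda x}\frac{(\lambda x)^{\zeta_{k}}}{\Pi_{k}!}=\frac{(-\lambda)^{\zeta_{k}}}{\Pi_{k}!\,k^{\zeta_{k}}}\frac{d^{\zeta_{k}}}{d\lambda^{\zeta_{k}}}e^{-k\lambda x}$, and then evaluates $\int_{0}^{\infty}e^{-k\lambda x}\,\mathbb{P}(S_{\alpha}(t)\in dx)=e^{-tk^{\alpha}\lambda^{\alpha}}$ as the Laplace transform of the stable subordinator. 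You instead substitute the closed-form marginal pmf from the corollary and convert the $u$-derivative at $u=1$ into a $\lambda$-derivative via the scaling $\lambda^{\alpha}u^{\alpha}=(\lambda u)^{\alpha}$; that chain-rule identity, $\frac{d^{\zeta_{k}}}{du^{\zeta_{k}}}e^{-tk^{\alpha}\lambda^{\alpha}u^{\alpha}}\big|_{u=1}=\lambda^{\zeta_{k}}\frac{d^{\zeta_{k}}}{d\lambda^{\zeta_{k}}}e^{-tk^{\alpha}\lambda^{\alpha}}$, is correct and is a clean observation.

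The gap is in the very next step. After the conversion each summand carries the factor $(-1)^{j}\lambda^{\zeta_{k}}$, and you ``absorb'' this into $(-\lambda)^{\zeta_{k}}=(-1)^{\zeta_{k}}\lambda^{\zeta_{k}}$. That requires $(-1)^{j}=(-1)^{\zeta_{k}}$, which fails for $k\ge 2$: take $k=2$, $j=2$, $X=(0,1)\in\Omega(2,2)$, so $\zeta_{k}=1$ while $j=2$; worse, $\Omega(2,2)$ also contains $(2,0)$ with $\zeta_{k}=2$, so the parity is not even constant across the inner sum and no common sign can be pulled out. The root cause is that the sign factor in the corollary you quote should itself be $(-1)^{\zeta_{k}}$ inside the sum over $X$ rather than $(-1)^{j}$ in front of it (as one sees by expanding the pgf, or by the pointwise identity above applied to the manifestly nonnegative Poisson-of-order-$k$ pmf); with that correction your derivation yields exactly the stated formula. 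As written, however, the absorption step silently changes the sign and is not a valid identity, so you should either rederive the corollary with the correct sign or switch to the paper's conditioning-plus-Laplace-transform route, which sidesteps the issue entirely.
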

\begin{proof}
Since $\mathbb{P}(T_{l}^{\alpha} <t) = \mathbb{P}(R^{k}_{\alpha} (t) \geq l)$, we have that
\begin{align*}
    \mathbb{P}(T_{l}^{\alpha} <t) = \sum_{j=l}^{\infty} \int_{0}^{\infty} \sum_{X \in \Omega(k,j)} e^{-k\lambda x} \frac{(\lambda x)^{\zeta_{k}}}{\Pi_{k}!} \mathbb{P}(S_{\alpha}(t) \in dx)
\end{align*}
 and thus, 
\begin{align*}
    \mathbb{P}(T_{l}^{\alpha} \in dt) &= \frac{d}{dt} \sum_{j=l}^{\infty} \int_{0}^{\infty} \sum_{X \in \Omega(k,j)} e^{-k\lambda x} \frac{(\lambda x)^{\zeta_{k}}}{\Pi_{k}!} \mathbb{P}(S_{\alpha}(t)\in dx) \\
    &= \frac{d}{dt}\sum_{j=l}^{\infty}\sum_{X \in \Omega(k,j)}\frac{(-\lambda)^{\zeta_k}}{\Pi_{k}! k^{\zeta_{k}}} \int_{0}^{\infty} \frac{d^{\zeta_k}}{d \lambda^{\zeta_{k}}} e^{-\lambda  k x}\mathbb{P}(S_{\alpha}(t) \in dx)\\
    & = \frac{d}{dt} \sum_{j=l}^{\infty}\sum_{X \in \Omega(k,j)}\frac{(-\lambda)^{\zeta_k}}{\Pi_{k}! k^{\zeta_{k}}} \frac{d^{\zeta_k}}{d \lambda^{\zeta_{k}}} e^{-t k^{\alpha}\lambda^{\alpha}}.
\end{align*}
\end{proof}

 \noindent It is evident that the arrival time of the first event for SFPPoK is exponential, i.e.
 $$ \mathbb{P}(T_{1}^{\alpha} \in dt) = k^{\alpha}\lambda^{\alpha}e^{-t k^{\alpha}\lambda^{\alpha}}.$$
 Further, the time of the second arrival does not follows the gamma (or Erlang) distribution, since
 $$ \mathbb{P}(T_{2}^{\alpha} \in dt) = \lambda^{\alpha} e^{-t k^{\alpha}\lambda^{\alpha}}(k^{\alpha} -\alpha k^{\alpha -1}+ \alpha \lambda^{\alpha}t k^{2\alpha-1}).$$
  The distribution of $T_{l}^{\alpha}$ has the following form
  $$ \mathbb{P}(T_{l}^{\alpha} \in dt) = \mathbb{P}(T_{l-1}^{\alpha} \in dt)  - \sum_{X\in\Omega(k,l-1)}\frac{(-\lambda)^{\zeta_k}}{\Pi_{k}! k^{\zeta_{k}}} \frac{d}{dt}\left(\frac{d^{\zeta_k}}{d \lambda^{\zeta_{k}}} e^{-t k^{\alpha}\lambda^{\alpha}}\right). $$

\begin{proposition}
The marginal distributions of SFPPoK are infinitely divisible.
\end{proposition}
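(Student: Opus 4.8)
The plan is to read off infinite divisibility directly from the probability generating function, which already has the exponential structure characteristic of a L\'evy process. Recall from \eqref{pgf_sfppok} that the pgf of the SFPPoK is
\begin{equation*}
G_{R_{\alpha}^k}(u,t) = e^{-k^{\alpha} \lambda^{\alpha} t(1-G_{X_1}(u))^{\alpha}},
\end{equation*}
which is of the form $e^{t\psi(u)}$ with $\psi(u) = -k^{\alpha}\lambda^{\alpha}(1-G_{X_1}(u))^{\alpha}$ not depending on $t$. The crucial observation is that $t$ enters only as a multiplicative factor in the exponent, so the family $\{G_{R_{\alpha}^k}(\cdot,t)\}_{t\geq 0}$ satisfies the semigroup relation $G_{R_{\alpha}^k}(u,s+t) = G_{R_{\alpha}^k}(u,s)\,G_{R_{\alpha}^k}(u,t)$.

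First I would fix $t>0$ and an arbitrary positive integer $m$, and factor the pgf as
\begin{equation*}
G_{R_{\alpha}^k}(u,t) = \left(e^{-\frac{k^{\alpha}\lambda^{\alpha}t}{m}(1-G_{X_1}(u))^{\alpha}}\right)^{m} = \left(G_{R_{\alpha}^k}(u,t/m)\right)^{m}.
\end{equation*}
The factor $G_{R_{\alpha}^k}(u,t/m)$ is exactly the pgf of the random variable $R_{\alpha}^k(t/m)$, so the display exhibits the law of $R_{\alpha}^k(t)$ as that of a sum of $m$ independent copies of $R_{\alpha}^k(t/m)$. Since $m$ is arbitrary, the marginal law of $R_{\alpha}^k(t)$ is $m$-divisible for every $m$, i.e.\ infinitely divisible.

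Alternatively, and even more briefly, one may invoke the representation $R^{\alpha}(t) = N^{k}(S_{\alpha}(t))$ established above: $N^{k}$ is a compound Poisson process and $S_{\alpha}$ is an independent stable subordinator, so by Bochner subordination $R_{\alpha}^k$ is itself a L\'evy process, whence every marginal is infinitely divisible by the standard correspondence between L\'evy processes and infinitely divisible laws (see, e.g., Sato \cite{Sato1999}). I do not expect any genuine obstacle here; the only point requiring a line of care is verifying that $G_{R_{\alpha}^k}(\cdot,t/m)$ is a bona fide pgf (equivalently, that the $m$-th root of the law is again a probability law), which is immediate from the fact that $R_{\alpha}^k(t/m)$ is an honest SFPPoK and hence takes values in the nonnegative integers. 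It is worth stressing the contrast with Proposition \ref{ID_fppok}: the time-change here is by the L\'evy subordinator $S_{\alpha}$, which preserves the semigroup structure, whereas the inverse stable subordinator governing the TFPPoK does not, and this is precisely why the two conclusions differ.
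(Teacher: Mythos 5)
Your argument is correct and is essentially the paper's own proof: the paper likewise observes that the characteristic exponent satisfies $\Phi_{t}(\theta) = n\,\Phi_{t/n}(\theta)$ because $t$ enters the exponent only as a multiplicative factor, and concludes infinite divisibility; you phrase the same factorization in terms of the pgf and make explicit that each factor is the law of $R_{\alpha}^k(t/m)$, which is the point the paper leaves implicit. Your alternative via the L\'evy-process representation $N^{k}(S_{\alpha}(t))$ is also valid and consistent with the subordination proposition already established in the paper.
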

\begin{proof}
 The characteristic exponent of SFPPoK $$\Phi_{t}(\theta)= \log(\mathbb{E}[e^{i\theta{R}^{k}_{\alpha}(t)}])=-k^{\alpha} \lambda^{\alpha} t\left(1-\frac{1}{k}\sum_{j=1}^{ k}(i\theta)^{j}\right)^{\alpha},$$ 
 then
 $$\Phi_{t/n}(\theta) = -k^{\alpha} \lambda^{\alpha} \frac{t}{n}\left(1-\frac{1}{k}\sum_{j=1}^{ k}(i\theta)^{j}\right)^{\alpha}$$
 which implies that
$\Phi_{t}(\theta) = n \Phi_{t/n}(\theta), \; n\geq 1.$
Thus marginal distributions of SFPPoK are Infinite divisible (see Steutel and Van Harn \cite{Steutel2004}).
\end{proof}

\begin{proposition}
The L\'evy density $\nu_{N_{\alpha}^{k}}(x)$ of SFPPoK is
\begin{equation}\label{levy_temp}
\nu_{N_{\alpha}^{k}}(x) = k^{\alpha}\lambda^{\alpha}\sum_{y=1}^{\infty}(-1)^{y+1}\sum_{X\in \Omega(k,y)} \frac{\alpha!}{(\alpha-\zeta_k)! \Pi_k !} \left(\frac{1}{k}\right)^{\zeta_k} \delta_{y}(x) dx.
\end{equation}
\end{proposition}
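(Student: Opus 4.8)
The plan is to read the L\'evy measure directly off the generating-function form of the L\'evy--Khintchine exponent, exploiting the fact established just above that SFPPoK is a L\'evy process. Since $R^{k}_{\alpha}(t)=N^{k}(S_{\alpha}(t))$ is non-decreasing and integer-valued, it is a subordinator supported on the positive integers, for which the exponent admits the representation
\[
\frac{1}{t}\log \mathbb{E}\big[u^{R^{k}_{\alpha}(t)}\big]=\int_{(0,\infty)}(u^{x}-1)\,\nu(dx)=\sum_{y=1}^{\infty}(u^{y}-1)\,\nu(\{y\}).
\]
Hence, once the exponent $\psi(u)=-k^{\alpha}\lambda^{\alpha}\big(1-G_{X_1}(u)\big)^{\alpha}$ from \eqref{pgf_sfppok} is expanded as a power series in $u$, the coefficient of $u^{y}$ for $y\ge 1$ is exactly the atom $\nu(\{y\})$, and the density written as a Dirac comb $\nu(dx)=\sum_{y\ge 1}\nu(\{y\})\,\delta_{y}(x)\,dx$ is the asserted formula \eqref{levy_temp}.

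First I would substitute $G_{X_1}(u)=\frac{1}{k}\sum_{j=1}^{k}u^{j}$ and expand by the generalized binomial theorem, $\big(1-G_{X_1}(u)\big)^{\alpha}=\sum_{n=0}^{\infty}\binom{\alpha}{n}(-1)^{n}G_{X_1}(u)^{n}$, and then expand each power multinomially,
\[
G_{X_1}(u)^{n}=\frac{1}{k^{n}}\Big(\sum_{j=1}^{k}u^{j}\Big)^{n}=\frac{1}{k^{n}}\sum_{x_1+\cdots+x_k=n}\frac{n!}{\Pi_{k}!}\,u^{\,x_1+2x_2+\cdots+kx_k}.
\]
I would then re-index the two nested sums by the exponent $y=x_1+2x_2+\cdots+kx_k$: for fixed $y$ the admissible multi-indices are precisely $X\in\Omega(k,y)$, each carrying $n=\zeta_{k}$. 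Collecting the coefficient of $u^{y}$ and using $\binom{\alpha}{\zeta_{k}}\zeta_{k}!=\alpha(\alpha-1)\cdots(\alpha-\zeta_{k}+1)=\frac{\alpha!}{(\alpha-\zeta_{k})!}$ gives
\[
[u^{y}]\big(1-G_{X_1}(u)\big)^{\alpha}=\sum_{X\in\Omega(k,y)}(-1)^{\zeta_{k}}\frac{\alpha!}{(\alpha-\zeta_{k})!}\,\frac{1}{\Pi_{k}!}\Big(\frac{1}{k}\Big)^{\zeta_{k}}.
\]
Multiplying by $-k^{\alpha}\lambda^{\alpha}$ and identifying this coefficient with $\nu(\{y\})$ yields the stated density; here the alternating sign in each summand is $(-1)^{\zeta_{k}+1}$, produced by the $(-1)^{n}$ of the binomial series with $n=\zeta_{k}$ together with the overall minus sign in $\psi$.

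Two evaluations then provide both the consistency checks and the analytic control. Evaluating the exponent at the interior point $u=0$ gives $G_{X_1}(0)=0$ and $\psi(0)=-k^{\alpha}\lambda^{\alpha}$, while the representation forces $\psi(0)=-\sum_{y\ge 1}\nu(\{y\})$; hence $\sum_{y\ge 1}\nu(\{y\})=k^{\alpha}\lambda^{\alpha}<\infty$, so $\nu$ is a \emph{finite} measure and SFPPoK is in fact a compound Poisson subordinator of total jump rate $k^{\alpha}\lambda^{\alpha}$, consistent with the exponential first-arrival law $\mathbb{P}(T_{1}^{\alpha}\in dt)=k^{\alpha}\lambda^{\alpha}e^{-tk^{\alpha}\lambda^{\alpha}}$ recorded above. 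Evaluating instead at $u=1$ gives $G_{X_1}(1)=1$ and $\psi(1)=0$, confirming that the $u^{0}$ term of $\psi$ is absorbed into the $-\sum_{y}\nu(\{y\})$ part of the exponent, so that only the coefficients with $y\ge 1$ feed into $\nu$.

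The main obstacle is the analytic bookkeeping rather than any single estimate: I must justify the termwise rearrangement of the doubly infinite binomial--multinomial series into a single sum ordered by $y$ and then over the finite set $\Omega(k,y)$, and confirm that the resulting atoms constitute a legitimate L\'evy measure. Both points follow from absolute convergence of the binomial series on $u\in(0,1)$, where $|G_{X_1}(u)|<1$, together with the computed finiteness $\sum_{y\ge 1}\nu(\{y\})=k^{\alpha}\lambda^{\alpha}$, which already guarantees $\int_{(0,\infty)}(1\wedge x)\,\nu(dx)<\infty$. Once the interchange of summation is secured, the identification $\nu(\{y\})=[u^{y}]\psi(u)$ is immediate; I would therefore concentrate the effort on that interchange and on the closed-form simplification of the $u^{y}$-coefficient.
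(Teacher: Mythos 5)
Your argument is correct and is essentially the paper's proof run in the opposite direction: the paper takes the claimed atomic density, inserts it into the L\'evy--Khintchine integral $\int (e^{i\theta x}-1)\,\nu(dx)$, and checks that the binomial--multinomial expansion reassembles into the characteristic exponent $-k^{\alpha}\lambda^{\alpha}\bigl(1-\tfrac1k\sum_{j=1}^{k}e^{i\theta j}\bigr)^{\alpha}$, while you expand the exponent $\psi(u)=-k^{\alpha}\lambda^{\alpha}(1-G_{X_1}(u))^{\alpha}$ and read the atoms off as the coefficients of $u^{y}$. The key identity --- $\binom{\alpha}{\zeta_k}\zeta_k!=\alpha!/(\alpha-\zeta_k)!$ combined with the multinomial expansion of $(\sum_{j=1}^{k}u^{j})^{n}$ indexed by $\Omega(k,y)$ --- is the same in both; your version additionally verifies that the total mass equals $k^{\alpha}\lambda^{\alpha}<\infty$, so the process is compound Poisson, a point the paper leaves implicit.

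One thing you should not gloss over: your expansion correctly attaches the sign $(-1)^{\zeta_k+1}$ to each multi-index $X\in\Omega(k,y)$ \emph{inside} the inner sum, whereas the proposition (and the paper's own verification) places $(-1)^{y+1}$ \emph{outside} it. These are not the same: already for $k=2$, $y=2$ the set $\Omega(2,2)=\{(2,0),(0,1)\}$ contains multi-indices with $\zeta_k=2$ and $\zeta_k=1$, so the two summands must carry opposite signs. Only the $(-1)^{\zeta_k+1}$ convention keeps every atom non-negative; with $(-1)^{y+1}$ one computes $\nu(\{2\})=k^{\alpha}\lambda^{\alpha}\bigl[\tfrac{\alpha(1-\alpha)}{8}-\tfrac{\alpha}{2}\bigr]<0$, which is impossible for a L\'evy measure. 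So your derivation does not ``yield the stated density'' verbatim --- it yields the corrected one --- and you should state that discrepancy explicitly rather than identify the two formulas.
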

\begin{proof} We find the L\'evy density with the help of  L\'evy-Khintchine formula (see Sato \cite{Sato1999}),
\begin{align*}
\int_{{\{ 0 \}}^{c}}(e^{i\theta x}-1)\nu_{N_{\alpha}^{k}}(dx)&=\int_{{\{ 0 \}}^{c}}(e^{i\theta x}-1)k^{\alpha}\lambda^{\alpha}\sum_{y=1}^{\infty}(-1)^{y+1}\sum_{X \in \Omega(k,y)} \frac{\alpha!}{(\alpha-\zeta_k)! \Pi_k !} \left(\frac{1}{k}\right)^{\zeta_k} \delta_{y}(x) dx \\
&=k^{\alpha}\lambda^{\alpha}\sum_{y=1}^{\infty}(-1)^{y+1}\sum_{X \in \Omega(k,y)} \frac{\alpha!}{(\alpha-\zeta_k)! \Pi_k !} \left(\frac{1}{k}\right)^{\zeta_k} (\delta_{y}(x)-1)-k^{\alpha} \lambda^{\alpha} +k^{\alpha} \lambda^{\alpha}\\
&=k^{\alpha}\lambda^{\alpha}\sum_{y=0}^{\infty}(-1)^{y+1}\sum_{X \in \Omega(k,y)} \frac{\alpha!}{(\alpha-\zeta_k)! \Pi_k !} \left(\frac{1}{k}\right)^{\zeta_k}e^{i\theta y}\\
& =  -k^{\alpha} \lambda^{\alpha} \left(1-\frac{1}{k}\sum_{j=1}^{ n \wedge k}e^{i\theta j}\right)^{\alpha},
\end{align*}
which is same as the characteristic exponent of SFPPoK given by $\Phi_1(\theta)=-k^{\alpha} \lambda^{\alpha} \left(1-\frac{1}{k}\sum_{j=1}^{ n \wedge k}e^{i\theta j}\right)^{\alpha}$,  hence proved.
\end{proof}

\section{Tempered time-space fractional Poisson process of order $k$ (TTSFPoK)}

In this section, we introduce and study the TTSFPoK.
\begin{definition}
Let $N^{\alpha, \beta}_{\mu,\nu}(t, k\lambda)$ be TTSFFP with rate parameter $k\lambda$ and $X_i,\;i=1,2,\cdots,$ be the sequence of iid discrete uniform random variables such that $\mathbb{P}(X_i = j) = \frac{1}{k},\; j=1,2,\cdots, k$. Also assume that $N^{\alpha, \beta}_{\mu,\nu}(t, k\lambda)$ and $X_i$ are independent. Then the process defined by
\begin{equation}\label{fppok}
Q_{\alpha, \beta}^k(t) = \sum_{i=1}^{N^{\alpha, \beta}_{\mu,\nu}(t, k\lambda)} X_i,
\end{equation}
is called the TTSFPPoK.
\end{definition}

\noindent The pgf of TTSFPPoK $Q_{\alpha, \beta}^k(t)$ is obtained by using equations \eqref{pgfttsfpp} and \eqref{pgf_disuni}, yielding to
\begin{equation}\label{pgf_ttsfppok}
    G_{Q_{\alpha, \beta}^k}(u,t) = \sum_{r=0}^{\infty}(-1)^{r}\left[(\mu-k\lambda(1-G_{X_1}(u)))^{\alpha}-\mu^{\alpha}\right]^{r}e^{-t\nu}\sum_{m=0}^{\infty}{\nu}^{m}t^{\beta r +m} M^{r}_{\beta,\beta r+m+1}({t}^{\beta}{\nu}^{\beta}).
\end{equation}

\begin{theorem}
The pmf $p_{\alpha,\beta}^k(n,t) = \mathbb{P}(Q_{\alpha, \beta}^k(t) = n)$ of TFPPoK satisfies the following fractional differential-difference equation
\begin{align}\label{tempered-TSFPP}
\frac{d^{\beta,\nu}}{dt^{\beta,\nu}}p_{\alpha, \beta}^{k}(n,t) = - \left[ \Bigg\{ \mu + k\lambda \left(1-\frac{1}{k}\sum_{j=1}^{n \wedge k} B^{n-j}\right)\Bigg\}^{\alpha} - \mu^{\alpha}\right]p_{\alpha, \beta}^{k}(n,t), \; n>0. 
\end{align}
\begin{proof}
Suppose $\{\mathcal{Z}p_{\alpha, \beta}^{k}(n,t)\}$ is the $z$-transform of $p_{\alpha, \beta}^{k}(n,t)$, then
\begin{align*}
\frac{d^{\beta,\nu}}{dt^{\beta,\nu}}\{\mathcal{Z}p_{\alpha, \beta}^{k}(n,t)\} = - \left[ \Bigg\{ \mu + k\lambda \left(1-\frac{1}{k}\sum_{j=1}^{k} Z^{-j}\right)\Bigg\}^{\alpha} - \mu^{\alpha}\right]\{\mathcal{Z}p_{\alpha, \beta}^{k}(n,t)\}.
\end{align*}
Using Laplace transform with respect to the time variable $t$ and assuming $|(\mu-k\lambda(1-G_{X_1}(u)))^{\alpha}-\mu^{\alpha}| < |(s+\nu)^{\beta}-\nu^{\beta}|$, it follows
\begin{align*}
\mathcal{L}\left[\{\mathcal{Z}p_{\alpha, \beta}^{k}(n,t)\}\right]=&\frac{1}{s}\left[1+\frac{\left[(\mu-k\lambda(1-G_{X_1}(u)))^{\alpha}-\mu^{\alpha}\right]}{(s+\nu)^{\beta}-\nu^{\beta}}\right]^{-1}\\
= &\sum_{r=0}^{\infty}(-1)^{r}\frac{\left[(\mu-k\lambda(1-G_{X_1}(u)))^{\alpha}-\mu^{\alpha}\right]^{r}}{s((s+\nu)^{\beta}-\nu^{\beta})^{r}}.
\end{align*}
The inverse LT (see Gupta et al. \cite{Gupta2020}) $$ \mathcal{L}^{-1}\left[\frac{1}{s((s+\nu)^{\beta}-\nu^{\beta})^{r}}\right]= \int_{0}^{t}e^{-\nu y} y^{\beta r-1}M^{r}_{\beta,\beta r}(\nu^{\beta} y^{\beta})dy = e^{-t \nu}\sum_{m=0}^{\infty}{\nu}^{m}t^{\beta r+m} M^{r}_{\beta,\beta r+m+1}(\nu^{\beta} t^{\beta}).
$$
Further,
\begin{align*}
\{\mathcal{Z}p_{\alpha,\beta}^{k}(n,t)\}=\sum_{r=0}^{\infty}(-1)^{r}\left[(\mu-k\lambda(1-G_{X_1}(u)))^{\alpha}-\mu^{\alpha}\right]^{r}e^{-t\nu}\sum_{m=0}^{\infty}{\nu}^{m}t^{\beta r +m} M^{r}_{\beta,\beta r+m+1}({t}^{\beta}{\nu}^{\beta}),
\end{align*}
the result follows by putting $u=z^{-1}$.
\end{proof}
\end{theorem}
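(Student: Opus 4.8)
The plan is to verify the asserted identity by transferring it to the generating-function domain, where the non-integer power of the difference operator becomes an ordinary function of a single variable and the whole difference--differential equation collapses to a scalar tempered-fractional relation that can be solved explicitly and matched against the known pgf \eqref{pgf_ttsfppok}. First I would apply the $z$-transform $\mathcal{Z}\{p_{\alpha,\beta}^{k}(n,t)\}=\sum_{n=0}^{\infty}p_{\alpha,\beta}^{k}(n,t)z^{-n}$ to both sides of the equation. Since the backward shift obeys $\mathcal{Z}\{B^{j}p(n,t)\}=z^{-j}\mathcal{Z}\{p(n,t)\}$ (using $p(n,t)=0$ for $n<0$), the operator $1-\frac{1}{k}\sum_{j=1}^{n\wedge k}B^{n-j}$ passes to $1-\frac{1}{k}\sum_{j=1}^{k}z^{-j}=1-G_{X_1}(z^{-1})$ by \eqref{pgf_disuni}, and the entire right-hand side becomes the scalar symbol occurring inside \eqref{pgf_ttsfppok} with $u=z^{-1}$. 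This reduces the problem to the scalar tempered-fractional equation
\begin{equation*}
\frac{d^{\beta,\nu}}{dt^{\beta,\nu}}\mathcal{Z}\{p_{\alpha,\beta}^{k}(n,t)\}=-\left[\bigl(\mu-k\lambda(1-G_{X_1}(z^{-1}))\bigr)^{\alpha}-\mu^{\alpha}\right]\mathcal{Z}\{p_{\alpha,\beta}^{k}(n,t)\}.
\end{equation*}

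Next I would take the Laplace transform in $t$ and apply the transform rule for the Caputo tempered derivative recorded in Section 2, together with the initial value $\mathcal{Z}\{p_{\alpha,\beta}^{k}(n,0)\}=1$, which follows from $p_{\alpha,\beta}^{k}(0,0)=1$ and $p_{\alpha,\beta}^{k}(n,0)=0$ for $n\geq1$. Solving the resulting algebraic equation and expanding the reciprocal as a geometric series yields
\begin{equation*}
\mathcal{L}\bigl[\mathcal{Z}\{p_{\alpha,\beta}^{k}(n,t)\}\bigr](s)=\sum_{r=0}^{\infty}(-1)^{r}\frac{\bigl[(\mu-k\lambda(1-G_{X_1}(z^{-1})))^{\alpha}-\mu^{\alpha}\bigr]^{r}}{s\bigl((s+\nu)^{\beta}-\nu^{\beta}\bigr)^{r}},
\end{equation*}
valid under the convergence hypothesis $|(\mu-k\lambda(1-G_{X_1}(z^{-1})))^{\alpha}-\mu^{\alpha}|<|(s+\nu)^{\beta}-\nu^{\beta}|$. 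I would then invert term by term using the inverse Laplace transform of Gupta et al. \cite{Gupta2020},
\begin{equation*}
\mathcal{L}^{-1}\!\left[\frac{1}{s\bigl((s+\nu)^{\beta}-\nu^{\beta}\bigr)^{r}}\right]=e^{-t\nu}\sum_{m=0}^{\infty}\nu^{m}t^{\beta r+m}M^{r}_{\beta,\beta r+m+1}(\nu^{\beta}t^{\beta}),
\end{equation*}
which reproduces exactly the pgf \eqref{pgf_ttsfppok} evaluated at $u=z^{-1}$. By the uniqueness of the generating function this identifies $\mathcal{Z}\{p_{\alpha,\beta}^{k}(n,t)\}$ with the transform of the $Q_{\alpha,\beta}^{k}$ pmf, so the original difference--differential equation holds.

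The step I expect to be the main obstacle is giving a rigorous meaning to the fractional power of the difference operator on the right-hand side: for $\alpha\in(0,1)$ this operator is defined only through its binomial (spectral) series, and this is precisely why the computation is cleanest after the $z$-transform, where it becomes the scalar fractional power $(\mu-k\lambda(1-G_{X_1}(z^{-1})))^{\alpha}$ and is unambiguous. The remaining work is analytic bookkeeping that nonetheless requires care: verifying the convergence inequality so that the geometric expansion and the term-by-term Laplace inversion are legitimate, and confirming that the inverted series genuinely \emph{equals} \eqref{pgf_ttsfppok} rather than merely matching it formally in each coefficient.
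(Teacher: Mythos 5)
Your proposal follows essentially the same route as the paper's own proof: $z$-transform to collapse the fractional difference operator into the scalar symbol $\bigl(\mu+k\lambda(1-G_{X_1}(z^{-1}))\bigr)^{\alpha}-\mu^{\alpha}$, Laplace transform in $t$ with the tempered Caputo rule and initial condition, geometric-series expansion under the same convergence hypothesis, term-by-term inversion via the same formula from Gupta et al., and identification with the pgf at $u=z^{-1}$. The extra care you flag (justifying the operator-to-symbol passage and the convergence of the expansion) is a welcome refinement of what the paper leaves implicit, but the argument is the same.
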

\noindent Next, we provide the subordination representation of TTSFPPoK.
\begin{proposition}
The marginals of TTSFPPoK can be obtained by subordinating the PPoK $N^{k}(t)$ by an
independent tempered stable subordinator $S_{\alpha, \mu}(t)$ and then by the inverse tempered stable subordinator $E_{\beta,\nu}(t)$,  such that
$$ Z^{\alpha,\beta}_{\mu, \nu}(t) = N^{k}(S_{\alpha, \mu}(E_{\beta,\nu}(t))),\; \alpha, \beta \in (0,1),\; \mu,\nu \geq 0,$$
then $ Z^{\alpha,\beta}_{\mu, \nu}(t) \stackrel{d}{=} Q_{\alpha, \beta}^k(t).$
\end{proposition}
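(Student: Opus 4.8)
The plan is to prove the distributional identity exactly as in the time-fractional case (the earlier proposition establishing $Z_\beta(t)\stackrel{d}{=}N^k_\beta(t)$), by checking that the two processes share the same probability generating function and invoking uniqueness of the pgf. Write $G_{X_1}$ for the pgf of the discrete uniform summand in \eqref{pgf_disuni}, and recall from \eqref{pgf_ppok} that the PPoK has pgf $G_{N^k}(u,s)=e^{-k\lambda s\left(1-G_{X_1}(u)\right)}$.

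First I would compute $G_{Z^{\alpha,\beta}_{\mu,\nu}}(u,t)=\mathbb{E}\!\left[u^{Z^{\alpha,\beta}_{\mu,\nu}(t)}\right]$ by conditioning successively on the two independent subordinators. Conditioning on the value of the inner time-change $S_{\alpha,\mu}(E_{\beta,\nu}(t))$ and using the PPoK pgf gives
\[
G_{Z^{\alpha,\beta}_{\mu,\nu}}(u,t)=\mathbb{E}\!\left[e^{-k\lambda\left(1-G_{X_1}(u)\right)S_{\alpha,\mu}(E_{\beta,\nu}(t))}\right].
\]
Next, conditioning on $E_{\beta,\nu}(t)$ and applying the Laplace transform of the tempered stable subordinator, $\mathbb{E}\!\left[e^{-zS_{\alpha,\mu}(r)}\right]=e^{-r\left((\mu+z)^{\alpha}-\mu^{\alpha}\right)}$ with $z=k\lambda\left(1-G_{X_1}(u)\right)$, reduces the expression to the Laplace transform of the inverse tempered stable subordinator evaluated at $\xi:=\left(\mu+k\lambda\left(1-G_{X_1}(u)\right)\right)^{\alpha}-\mu^{\alpha}$, namely $\mathbb{E}\!\left[e^{-\xi E_{\beta,\nu}(t)}\right]$.

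Finally I would expand $\mathbb{E}\!\left[e^{-\xi E_{\beta,\nu}(t)}\right]$ as a power series in $\xi$ and invert it term by term using the inverse Laplace identity $\mathcal{L}^{-1}\!\left[\frac{1}{s\left((s+\nu)^{\beta}-\nu^{\beta}\right)^{r}}\right]=e^{-t\nu}\sum_{m=0}^{\infty}\nu^{m}t^{\beta r+m}M^{r}_{\beta,\beta r+m+1}(\nu^{\beta}t^{\beta})$ already recorded in the proof of the preceding theorem. The resulting series is precisely \eqref{pgf_ttsfppok}, the pgf of $Q^{k}_{\alpha,\beta}(t)$; equivalently, it is the TTSFPP pgf \eqref{pgfttsfpp} with $u$ replaced by $G_{X_1}(u)$, which is exactly what one expects since the PPoK is the compound Poisson block underlying the TTSFPP. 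Uniqueness of the pgf then yields $Z^{\alpha,\beta}_{\mu,\nu}(t)\stackrel{d}{=}Q^{k}_{\alpha,\beta}(t)$.

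The bulk of the argument is routine bookkeeping; the one place requiring care is justifying the interchange of expectation and summation in the term-by-term inversion, which is legitimate in the region $|u|<1$ (equivalently $|\xi|<\left|(s+\nu)^{\beta}-\nu^{\beta}\right|$, the same convergence domain used for \eqref{pgfttsfpp}), together with tracking the sign convention in the tempered Laplace exponent so that the final series lines up with \eqref{pgf_ttsfppok}. A quicker but essentially equivalent route, which the pgf computation serves to make rigorous, is to note that since the PPoK is $N^{k}(\cdot)=\sum_{i=1}^{N(\cdot)}X_{i}$ with $N$ Poisson of rate $k\lambda$, one has $Z^{\alpha,\beta}_{\mu,\nu}(t)=\sum_{i=1}^{N(S_{\alpha,\mu}(E_{\beta,\nu}(t)))}X_{i}=\sum_{i=1}^{N^{\alpha,\beta}_{\mu,\nu}(t,k\lambda)}X_{i}=Q^{k}_{\alpha,\beta}(t)$ in distribution.
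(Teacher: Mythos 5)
Your proposal is correct and follows essentially the same route as the paper: the paper's proof is the one-line statement that "the result follows by comparing the pgf of $N^{k}(S_{\alpha,\mu}(E_{\beta,\nu}(t)))$ and $Q_{\alpha,\beta}^k(t)$," and your conditioning computation simply carries out that comparison in detail (and correctly flags the sign convention in the tempered Laplace exponent, where the paper's own displayed pgf \eqref{pgfttsfpp} is internally inconsistent with the governing equation \eqref{tempered-TSFPP}). Your closing observation that the identity is immediate from the compound Poisson structure $N^{k}(\cdot)=\sum_{i=1}^{N(\cdot)}X_i$ is a nice, cleaner justification than the pgf bookkeeping.
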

\begin{proof}
The result follows by comparing the pgf of the $N^{k}(S_{\alpha, \mu}(E_{\beta,\nu}(t)))$ and $Q_{\alpha, \beta}^k(t).$
\end{proof}
 \begin{remark}
 For $k=1$, the TTSFPPoK $Q_{\alpha, \beta}^k(t)$ coincide with the TTSFPP $N_{\mu,\nu}^{\alpha,\beta}(t)$ (see  Gupta et al. \cite{Gupta2020}).
 \end{remark}
\section{Fractional Poisson fields of order k}
In this section, we introduce two types of fractional Poisson fields of order $k.$ The space-fractional Poisson field of order $k$ denoted by $N^{k}_{\alpha_1, \alpha_2,\ldots,\alpha_m}(t_1, t_2,\ldots, t_m)$ and the time-fractional Poisson filed of order $k$ denoted by $N^{k}_{\beta_1, \beta_2, \ldots, \beta_m}(t_1, t_2, \dots,t_m).$ These are generalizations of SFPPoK and TFPPoK in higher dimensions. The introduced random fields are defined as follows:
$$
N^{k}_{\alpha_1, \alpha_2,\ldots,\alpha_m}(t_1, t_2,\ldots, t_m) = N^{k}(S_{\alpha_1}^{1}(t_1),  S_{\alpha_2}^{2}(t_2),\dots, S_{\alpha_m}^{m}(t_m) ),
$$
$$
N^{k}_{\beta_1, \beta_2, \ldots, \beta_m}(t_1, t_2, \dots,t_m) = N^{k}(E_{\beta_1}^{1}(t_1),  E_{\beta_2}^{2}(t_2),\dots, \E_{\beta_m}^{m}(t_m)),\; (t_1, t_2, \ldots, t_m)\in \mathbb{R}_{+}^{m},
$$
where Poisson field of order $k$ denoted by  $N^k(t_1, t_2, \cdots, t_m) = N^{k}([0, t_1] \times [0,t_2]\times \cdots \times [0, t_m])$ on  $\mathbb{R}_{+}^{m}$ is independent of the stable subordinators $S_{\alpha_j}^{j}(t_j)$ and inverse stable subordinators $E_{\beta_j}^{j}(t_j)$ with indices $\alpha_j \in (0,1)$ and $\beta_j \in (0,1)$ for $j= 1,2, \ldots,m $. We also assume that all $E_{\beta_j}^{j}(t_j)$ and $S_{\alpha_j}^{j}(t_j)$ are independent for all $j= 1,2, \ldots,m$. For time-fractional random field of order $k=1$, see Leonenko and Merzbach \cite{Leonenko2015}.
\begin{proposition}\label{pmf_tfpfok}
 The pmf of the time-fractional Poisson field of order $k$ is given by
 \begin{align}
    \mathbb{P}(N^{k}_{\beta_1, \beta_2, \ldots, \beta_m}(t_1, t_2, \dots,t_m) = n) &=\sum_{X\in\Omega(k,n)}  \frac{(\lambda )^{\zeta_{k}}}{\Pi_{k}! \beta_1 \beta_2 \cdots \beta_m}
    \int_{0}^{\infty} \int_{0}^{\infty} \cdots\int_{0}^{\infty} e^{-k\lambda x_1x_2\cdots x_m}\times\nonumber\\ &\prod_{j=1}^{m}x_j^{\zeta_{k}-1}W_{-\beta_j, 0}\left(-\frac{x_j}{t_j^{\beta_j}}\right) dx_1dx_2\cdots dx_m, \; n=0,1,2,\ldots.
\end{align}
\end{proposition}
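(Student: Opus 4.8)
The plan is to prove the formula by conditioning on the values of the $m$ independent inverse stable subordinators and then invoking the explicit pmf of the homogeneous Poisson field of order $k$ from Proposition \ref{pmf_pfok}. Since $N^{k}(t_1,\ldots,t_m)=N^{k}([0,t_1]\times\cdots\times[0,t_m])$ is, by construction, independent of each $E_{\beta_j}^{j}$, and the subordinators $E_{\beta_1}^{1},\ldots,E_{\beta_m}^{m}$ are mutually independent, the joint law of $(E_{\beta_1}^{1}(t_1),\ldots,E_{\beta_m}^{m}(t_m))$ factorizes as $\prod_{j=1}^{m} h_{\beta_j}(x_j,t_j)\,dx_j$, with $h_{\beta_j}$ given by \eqref{pdf_inverse}. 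First I would write
\[
\mathbb{P}\!\left(N^{k}_{\beta_1,\ldots,\beta_m}(t_1,\ldots,t_m)=n\right)=\int_{0}^{\infty}\!\!\cdots\!\int_{0}^{\infty}\mathbb{P}\!\left(N^{k}(x_1,\ldots,x_m)=n\right)\prod_{j=1}^{m}h_{\beta_j}(x_j,t_j)\,dx_1\cdots dx_m.
\]

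The key observation is that the Lebesgue measure of the rectangle $[0,x_1]\times\cdots\times[0,x_m]$ is the product $m_{d}=x_1 x_2\cdots x_m$, so that Proposition \ref{pmf_pfok}, applied in its $m$-dimensional form, gives
\[
\mathbb{P}\!\left(N^{k}(x_1,\ldots,x_m)=n\right)=e^{-k\lambda x_1\cdots x_m}\sum_{X\in\Omega(k,n)}\frac{(\lambda x_1\cdots x_m)^{\zeta_{k}}}{\Pi_{k}!}.
\]
Substituting this together with $h_{\beta_j}(x_j,t_j)=\frac{x_j^{-1}}{\beta_j}W_{-\beta_j,0}\!\left(-\frac{x_j}{t_j^{\beta_j}}\right)$ into the iterated integral, I would pull the finite sum over $\Omega(k,n)$ outside, collect the constant $\lambda^{\zeta_{k}}/(\Pi_{k}!\,\beta_1\cdots\beta_m)$, and combine the powers of the integration variables using $(x_1\cdots x_m)^{\zeta_{k}}\prod_{j=1}^{m}x_j^{-1}=\prod_{j=1}^{m}x_j^{\zeta_{k}-1}$. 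This reproduces exactly the claimed expression.

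The argument is essentially bookkeeping, so the only point that genuinely requires care is the conditioning step: one must justify passing from the law of the time-changed field to the iterated integral against the product density, which follows from the assumed mutual independence of $N^{k}$ and the $E_{\beta_j}^{j}$ together with Tonelli's theorem, since all integrands are nonnegative. Because $\Omega(k,n)$ is a finite set, exchanging the summation with the integral is immediate and needs no further justification. Thus I anticipate no real obstacle beyond correctly identifying $m_{d}$ with the product $x_1\cdots x_m$ and applying Proposition \ref{pmf_pfok} in the appropriate $m$-dimensional form.
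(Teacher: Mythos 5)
Your proposal is correct and follows essentially the same route as the paper's proof: condition on the independent inverse stable subordinators, apply Proposition \ref{pmf_pfok} with $m_d([0,x_1]\times\cdots\times[0,x_m])=x_1\cdots x_m$, and substitute the densities $h_{\beta_j}$ from \eqref{pdf_inverse}. Your added remarks on Tonelli's theorem and the finiteness of $\Omega(k,n)$ only make the bookkeeping more explicit than the paper's version.
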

\begin{proof}
By  standard conditional argument and using the equation \eqref{pdf_inverse} and Prop. \ref{pmf_pfok},
$$
\mathbb{P}(N^{k}_{\beta_1, \beta_2, \ldots, \beta_m}(t_1, t_2, \dots,t_m) = n) = \int_{0}^{\infty} \int_{0}^{\infty} \cdots\int_{0}^{\infty} \mathbb{P}(N^{k}(x_1, x_2,\ldots,x_m) = n)\prod_{j=1}^{m} h_{\beta_j}(x_j,t_j)dx_1dx_2,\cdots dx_m,
$$
where $h_{\beta_j}(x_j,t_j)$ are densities of independent inverse stable subordinator $E_{\beta_j}^{j}(t_j)$ in equation \eqref{pdf_inverse}. Using the pmf of $N^{k}(x_1, x_2,\ldots,x_m)$, the equation \eqref{inverse_result} and the relationship  $m_{d}([0,x_1]\times[0,x_2]\times \cdots \times [0,x_m]) = x_1x_2x_3\cdots x_m$, we get the desired result.

\end{proof}
\noindent The mean and variance of $N^{k}_{\beta_1, \beta_2, \ldots, \beta_m}(t_1, t_2, \dots,t_m)$ are given by
\begin{align*}
\mathbb{E}[N^{k}_{\beta_1, \beta_2, \ldots, \beta_m}(t_1, t_2, \dots,t_m)] &= \frac{\lambda k(k+1)}{2} \prod_{j=1}^{m}\frac{t_j^{\beta_j}}{\Gamma(1+\beta_j) }\\
{\rm Var}[N^{k}_{\beta_1, \beta_2, \ldots, \beta_m}(t_1, t_2, \dots,t_m)] &= \frac{k(k+1)(2k+1)\lambda }{6}\prod_{j=1}^{m}\frac{t_j^{\beta_j}}{\Gamma(1+\beta_j) }\\
&+\frac{\lambda^2 k^2(k+1)^2 }{4} \prod_{j=1}^{m} t_j^{2\beta_j}\left(\frac{1}{\prod_{j=1}^{m} \beta_j \Gamma(2\beta_j)}- \frac{1}{(\prod_{j=1}^{m}\beta_j)^2 \prod_{j=1}^{m}\Gamma^2(\beta_j)}\right).
\end{align*}
\begin{remark}
Putting $k=1$ and $m=2$,  the fractional Poisson field of order k $N^{k}_{\beta_1, \beta_2, \ldots, \beta_m}(t_1, t_2, \dots,t_m)$ reduces to fractional Poisson field (see Leonenko and Merzbach \cite{Leonenko2015}).
\end{remark}
\begin{proposition}
The marginals pmf of time-fractional Poisson fields of order $k$ are not infinitely divisible.
\end{proposition}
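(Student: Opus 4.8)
The plan is to mirror the scaling-limit argument of Proposition \ref{ID_fppok}, now carried out jointly in the $m$ inverse stable subordinators. First I would record the two structural facts I need. By self-similarity of the inverse stable subordinator (read off from the density \eqref{pdf_inverse} together with \eqref{inverse_result}), $E_{\beta_j}^{j}(t_j)\stackrel{d}{=}t_j^{\beta_j}E_{\beta_j}^{j}(1)$; and since the homogeneous Poisson field of order $k$ over the box $[0,x_1]\times\cdots\times[0,x_m]$ depends on the box only through its volume $m_d=x_1\cdots x_m$ (Proposition \ref{pmf_pfok}), the field marginal along the diagonal $t_1=\cdots=t_m=t$ admits the compound representation
$$N^{k}_{\beta_1,\ldots,\beta_m}(t,\ldots,t)\stackrel{d}{=}\sum_{i=1}^{N\left(t^{\sum_j\beta_j}\,\Theta\right)}X_i,\qquad \Theta:=\prod_{j=1}^{m}E_{\beta_j}^{j}(1),$$
where $N(\cdot)$ is a homogeneous Poisson process with rate $k\lambda$, independent of the iid discrete-uniform $X_i$ and of $\Theta$.

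Next I would run exactly the decomposition used in \ref{ID_fppok}: dividing by $t^{\sum_j\beta_j}$ and splitting the compound sum as (ratio of the sum to the count)$\times$(count over its mean)$\times\,\Theta$. By Theorem 3.1.5 of Mikosch \cite{Mikosch2009} the first factor tends almost surely to $\mathbb{E}X_1$, by the renewal theorem the second tends almost surely to $k\lambda$, and $\Theta>0$ almost surely is fixed, so
$$t^{-\sum_j\beta_j}\,N^{k}_{\beta_1,\ldots,\beta_m}(t,\ldots,t)\ \xrightarrow{d}\ k\lambda\,\mathbb{E}(X_1)\,\Theta\qquad (t\to\infty).$$
If every marginal were infinitely divisible, then so would be each rescaled variable (scaling preserves infinite divisibility, Steutel and Van Harn \cite{Steutel2004}), and since a weak limit of infinitely divisible laws is infinitely divisible (\cite{Steutel2004}, Sato \cite{Sato1999}), the limit $k\lambda\,\mathbb{E}(X_1)\,\Theta$, hence $\Theta$ itself, would be infinitely divisible.

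The crux is therefore to show that $\Theta=\prod_{j=1}^{m}E_{\beta_j}^{j}(1)$ is not infinitely divisible. For $m=1$ this is exactly Kumar and Nane \cite{kumar_nane2018}, and the argument above then reproduces Proposition \ref{ID_fppok}. For $m\geq 2$ this is the main obstacle, and I expect it to be genuinely harder than the one-dimensional case: a product of independent non-infinitely-divisible positive variables need not be non-infinitely-divisible, so the claim cannot be reduced to the $m=1$ statement by any soft argument, and a cumulant test will not decide it (using $\mathbb{E}[E_\beta(1)^n]=n!/\Gamma(1+n\beta)$ one checks that the low-order cumulants of $E_\beta(1)$, and hence of $\Theta$, are all positive, so the necessary condition that a nonnegative infinitely divisible law have nonnegative cumulants is satisfied and yields no contradiction).

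The route I would pursue is to work directly with the Laplace transform $\phi_\Theta(s)=\mathbb{E}\,e^{-s\Theta}$, which the $m$-fold independence renders an explicit product of Mittag-Leffler-type transforms (equivalently, to use the iterated Wright-function density of $\Theta$ obtained along the lines of Proposition \ref{pmf_tfpfok}), and to verify that $-\log\phi_\Theta$ fails to be a Bernstein function, i.e.\ that $(-\log\phi_\Theta)'$ is not completely monotone, or that the signed measure recovered from the L\'evy--Khintchine exponent is not nonnegative. Establishing this failure, most cleanly through the small- and large-argument asymptotics of $\phi_\Theta$, is the step I expect to carry the real weight of the proof.
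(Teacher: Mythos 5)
Your reduction is exactly the one the paper uses: the same self-similarity $E_{\beta_j}^{j}(t_j)\stackrel{d}{=}t_j^{\beta_j}E_{\beta_j}^{j}(1)$, the same compound-sum decomposition with the Mikosch/renewal a.s.\ limits, and the same conclusion that a suitably rescaled marginal converges in distribution to a constant multiple of $\Theta=\prod_{j=1}^{m}E_{\beta_j}^{j}(1)$, so that infinite divisibility of the marginals would force $\Theta$ to be infinitely divisible. (The paper lets $t_1,\dots,t_m\to\infty$ separately rather than along the diagonal, which changes nothing.) The divergence is in the last step. The paper disposes of it in one line by asserting that ``the product of two non-infinitely divisible random variables is also non-infinitely divisible'' and appealing to the $m=1$ result of Kumar and Nane \cite{kumar_nane2018}. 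You rightly decline to use that principle: it is not a theorem, and the analogous statement for sums is false (there exist non-infinitely-divisible laws whose symmetrization $X-X'$, i.e.\ the sum of two independent non-infinitely-divisible variables, is infinitely divisible). So the step you single out as the crux is precisely where the paper's own proof is unjustified.

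That said, your proposal does not close the gap either: you outline a program (show that $-\log\mathbb{E}\,e^{-s\Theta}$ fails to be a Bernstein function via Laplace-transform asymptotics) but do not execute it, and you say explicitly that this step carries the real weight. As written, then, the statement remains unproven for $m\ge 2$. If you want to finish along concrete lines, the most promising route is the tail criterion behind \cite{kumar_nane2018}: a non-degenerate infinitely divisible law on $[0,\infty)$ cannot have $\mathbb{P}(X>x)=o(e^{-cx\log x})$ for every $c>0$ (Sato \cite{Sato1999}), while $\mathbb{P}(E_\beta(1)>x)$ decays like $\exp(-c\,x^{1/(1-\beta)})$. But you should check that this survives the product: optimizing the joint tail shows $\mathbb{P}(\Theta>z)$ decays roughly like $\exp\bigl(-c\,z^{1/\sum_j(1-\beta_j)}\bigr)$, which beats $e^{-cz\log z}$ only when $\sum_{j=1}^{m}(1-\beta_j)<1$. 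So even this argument proves the proposition only on part of the parameter range, and the general case genuinely remains open in both your write-up and the paper's.
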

\begin{proof}
For $m=2$, we have
$$
\displaystyle \lim_{t_1, t_2 \rightarrow\infty} \frac{N^{k}([0, t_1] \times [0,t_2])}{t_1 t_2 }\stackrel{a.s.} \rightarrow \frac{k (k+1)}{2}\lambda.
$$
Also,
$$
\frac{N^{k}([0, E_{\beta_1}^1(t_1)] \times [0,E_{\beta_2}^2(t_2)])}{t_1^{\beta_1} t_2^{\beta_2}} \stackrel{d}= \frac{N^{k}([0, t_1^{\beta_1}E_{\beta_1}^1(1)] \times [0,t_2^{\beta_2}E_{\beta_2}^2(1)])}{t_1^{\beta_1}t_2^{\beta_2}} \stackrel{a.s.} \rightarrow \frac{k (k+1)}{2}\lambda  E_{\beta_1}^{1}(1)E_{\beta_2}^{2}(1),
$$
as $t_1, t_2 \rightarrow \infty.$
Note that $E_{\beta_1}^1(1)$ and $E_{\beta_2}^2(1)$ are not infinitely divisible. Since the product of two non-infinitely divisible random variables is also non-infinitely divisible, using a similar argument as discussed in the Prop. \ref{ID_fppok}, the marginals distribution of $N^{k}_{\beta_1,\beta_2}(t_1,t_2)$ are not infinite divisible. The result follows similarly for general $m.$
\end{proof}

\begin{proposition}\label{pmf_sfpfok}
 The pmf of the $N^{k}_{\alpha_1, \alpha_2,\ldots,\alpha_m}(t_1, t_2,\ldots, t_m)$ is given by
 \begin{align}
    \mathbb{P}(N^{k}_{\alpha_1, \alpha_2,\ldots,\alpha_m}(t_1, t_2,\ldots, t_m) = n) &= \sum_{X\in\Omega(k,n)}  \frac{(\lambda )^{\zeta_{k}}}{\Pi_{k}! }\int_{0}^{\infty} \int_{0}^{\infty} \cdots\int_{0}^{\infty} e^{-k\lambda x_1x_2\cdots x_m}\times\nonumber\\ &\prod_{j=1}^{m}x_j^{\zeta_{k}-1}W_{-\alpha_j, 0}\left(-\frac{t_j}{x_j^{\alpha_j}}\right) dx_1dx_2\cdots dx_m,\; n=0,1,2,\ldots.
\end{align}
\end{proposition}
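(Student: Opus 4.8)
The plan is to run the same conditioning argument used in the proof of Proposition \ref{pmf_tfpfok}, the only change being that the inverse stable subordinator densities $h_{\beta_j}$ are replaced by the stable subordinator densities $f_{\alpha_j}$. First I would condition on the vector of subordinator values $(S_{\alpha_1}^1(t_1),\dots,S_{\alpha_m}^m(t_m)) = (x_1,\dots,x_m)$. Since the $S_{\alpha_j}^j$ are mutually independent and jointly independent of the Poisson field of order $k$, the conditioning density factorizes and, arguing exactly as in Proposition \ref{pmf_tfpfok}, one gets
\begin{equation*}
\mathbb{P}(N^{k}_{\alpha_1,\ldots,\alpha_m}(t_1,\ldots,t_m)=n) = \int_{0}^{\infty}\cdots\int_{0}^{\infty}\mathbb{P}(N^{k}(x_1,\ldots,x_m)=n)\prod_{j=1}^{m}f_{\alpha_j}(x_j,t_j)\,dx_1\cdots dx_m,
\end{equation*}
where $f_{\alpha_j}(x_j,t_j)$ is the density of $S_{\alpha_j}^{j}(t_j)$ recorded in \eqref{inverse_result}.

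Next I would substitute the pmf of the homogeneous Poisson field of order $k$ from Proposition \ref{pmf_pfok}, using the volume identity $m_{d}([0,x_1]\times\cdots\times[0,x_m]) = x_1x_2\cdots x_m$, so that $\mathbb{P}(N^{k}(x_1,\ldots,x_m)=n) = e^{-k\lambda x_1\cdots x_m}\sum_{X\in\Omega(k,n)}\frac{(\lambda x_1\cdots x_m)^{\zeta_{k}}}{\Pi_{k}!}$, and then insert the closed form $f_{\alpha_j}(x_j,t_j) = \frac{1}{x_j}W_{-\alpha_j,0}(-t_j x_j^{-\alpha_j})$ from \eqref{inverse_result}. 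Pulling the finite sum over $\Omega(k,n)$ and the constant $\lambda^{\zeta_k}/\Pi_k!$ outside the integral, the remaining $x_j$-dependence is a product: the factor $(x_1\cdots x_m)^{\zeta_k}$ contributes $x_j^{\zeta_k}$ in each coordinate, while each density contributes $x_j^{-1}W_{-\alpha_j,0}(-t_j/x_j^{\alpha_j})$, and combining the powers gives $x_j^{\zeta_k-1}W_{-\alpha_j,0}(-t_j/x_j^{\alpha_j})$. The exponential $e^{-k\lambda x_1\cdots x_m}$ passes through unchanged, and collecting everything yields exactly the stated expression.

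Since every manipulation is the routine one already carried out for Proposition \ref{pmf_tfpfok}, there is no genuine obstacle; the one point that deserves care is keeping track of the two structural differences between the stable density $f_{\alpha_j}$ in \eqref{inverse_result} and the inverse stable density $h_{\beta_j}$ in \eqref{pdf_inverse}. Namely, $f_{\alpha_j}$ carries no $1/\alpha_j$ normalization factor and has Wright-function argument $-t_j/x_j^{\alpha_j}$, whereas $h_{\beta_j}$ carries the factor $1/\beta_j$ and argument $-x_j/t_j^{\beta_j}$. This is precisely what makes the space-fractional pmf differ from the time-fractional one of Proposition \ref{pmf_tfpfok}: the product $\prod_j(1/\beta_j)$ is absent here, and the arguments of the $W_{-\alpha_j,0}$ factors are inverted. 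Tracking these two features correctly is all that is needed to land on the claimed formula.
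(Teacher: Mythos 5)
Your proposal is correct and follows exactly the route the paper intends: the paper's own proof is just the one-line remark that the argument of Proposition \ref{pmf_tfpfok} carries over, and you have carried it over correctly, replacing the inverse stable densities $h_{\beta_j}$ by the stable densities $f_{\alpha_j}(x_j,t_j)=\frac{1}{x_j}W_{-\alpha_j,0}(-t_j x_j^{-\alpha_j})$ from \eqref{inverse_result}. Your explicit accounting for the absence of the $1/\alpha_j$ factors and the inverted Wright-function argument is exactly the bookkeeping needed to land on the stated formula.
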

\begin{proof}
Using the similar approach as in Prop. \ref{pmf_tfpfok}, it follows.
\end{proof}

\section{A martingale characterization}
Let $L_{f}(t)$ be a real valued  driftless subordinator with strictly increasing sample paths. The Laplace transform for $L_{f}(t)$ is of the form 
$$ \mathbb{E}[e^{-s L_{f}(t)}] = e^{-tf(s)}, $$
where
$$ f(s) =  \int_{0}^{\infty} (1-e^{x s}) \nu(dx),\;\; s>0,\; b\geq0,$$
is the integral representation of  Bernstein functions (see Schilling and Song \cite{Schilling2010}). Here the non-negative L\'evy measure $\nu$ on $\mathbb{R}_{+} \cup \{0\}$ satisfies
$$ \int_{0}^{\infty}(x \wedge 1)  \nu(dx) < \infty,  \; \; \nu([0, \infty)) = \infty.
$$
 The inverse subordinator $H_{f}(t)$ is the first exist time of $L_{f}$, defined by,
 $$H_{f}(t) = \inf\{u\geq0 : L_{f}(u) >t\}.$$ 
The process $H_{f}(t)$ is non-decreasing and its sample paths are continuous. Note that since $L_f$ is continuous that means the composition $H_{f}(L_f(x))= x,\; \forall x$ but converse is not true see e.g. Fortelle \cite{Arnaud2015}.
\noindent Next, we discuss some special cases of strictly increasing subordinators. The following subordinators with Laplace exponent denoted by $f(s)$ are very often used in literature.
\begin{equation}\label{Levy_exponent}
f(s) =
         \begin{cases}
                  s^{\alpha},\; 0<\alpha<1, & $(stable subordinator)$;\\
                 \sum_{i=1}^{n}{c_{i}s^{\alpha_{i}}},\;c_{i}\geq{0},\; \sum_{i=1}^{n}{c_{i}}=  1, &$(mixed stable  subordinator)$;\\
                  (s+\mu)^{\alpha}-\mu^{\alpha},\;\mu >0,\; 0<\alpha<1,&$(tempered stable  subordinator)$;\\
                 \sum_{i=1}^{n}{c_{i}((s+\mu_{i})^{\alpha_{i}}-{\mu_{i}}^{\alpha_{i}})},\;c_{i}\geq{0},\; \sum_{i=1}^{n}{c_{i}}=  1, &$(mixture of tempered stable  subordinator)$;\\
                  p\log(1+\frac{s}{\alpha}),\; p>0,\ \alpha >0, &$(gamma subordinator)$;\\
                  
                   \delta(\sqrt{2s+\gamma^2}-\gamma),\; \gamma>0,\; \delta>0, & $(inverse Gaussian subordinator)$.
			\end{cases}
\end{equation}
\noindent Next, we obtain the martingale characterization for time-changed Poisson processes where the time-change is inverse subordinators.
\begin{proposition}\label{Watanabe}
 Let $\{V(t), \; t\geq 0\}$ be a $\{ \mathscr{F}_{t}\}_{t\geq0}$ adapted simply locally finite point process and $L_{f}(t)$ be the strictly increasing subordinators  such that $H_{f}(t) = \inf\{u\geq0 : L_{f}(u) >t\}$ is the inverse of $L_{f}(t)$. Suppose $H_f(t)$ is $L^p$ bounded for some $p>1$. The process $\{V(t)- \lambda H_{f}(t)\}$ is a right continuous martingale with respect to the filtration $\mathscr{F}_{t}= \sigma(V(s), s\leq t) \vee \sigma(H_{f}(s), s\leq t)$ for some $\lambda >0$ iff $V$ is a subordinated Poisson process with time-change $H_f(t)$.
\end{proposition}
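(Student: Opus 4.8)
The plan is to prove both implications by transferring, through the duality between the subordinator $L_f$ and its inverse $H_f$, to the classical Watanabe characterization, which states that a simple point process $N$ is a Poisson process of rate $\lambda$ precisely when $N(t)-\lambda t$ is a martingale. The two facts I will lean on are the identity $\{L_f(u)<t\}=\{H_f(t)>u\}$, which follows at once from the definition $H_f(t)=\inf\{v:L_f(v)>t\}$, and the composition rule $H_f(L_f(u))=u$ noted before the statement, valid because $L_f$ is strictly increasing.

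For the forward implication, suppose $V(t)=N(H_f(t))$ with $N$ a rate-$\lambda$ Poisson process independent of $L_f$. Writing $\tilde M(u)=N(u)-\lambda u$ for the compensated Poisson martingale, we have $M(t):=V(t)-\lambda H_f(t)=\tilde M(H_f(t))$. Conditioning on the entire path of $H_f$ and using $N\perp H_f$, the times $H_f(s)\le H_f(t)$ become deterministic, so the martingale property of $\tilde M$ at deterministic times yields $\mathbb{E}[M(t)\mid\mathscr{F}_s]=M(s)$ for $s\le t$. Integrability is immediate from $\mathbb{E}|M(t)|\le 2\lambda\,\mathbb{E}[H_f(t)]<\infty$, the finiteness coming from the assumed $L^p$ (hence $L^1$) bound on $H_f$, while right-continuity of $M$ follows from that of $V$ and the continuity of $H_f$.

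For the converse I would run the time change in reverse. The identity $\{L_f(u)<t\}=\{H_f(t)>u\}\in\mathscr{F}_t$ shows, under the usual right-continuity of the filtration, that each $L_f(u)$ is an $\mathscr{F}_t$-stopping time, so that $\mathscr{G}_u:=\mathscr{F}_{L_f(u)}$ is a bona fide filtration. Setting $N(u):=V(L_f(u))$ and using $H_f(L_f(u))=u$ gives $M(L_f(u))=N(u)-\lambda u$, and applying Doob's optional sampling theorem to $M$ at the stopping times $L_f(u_1)\le L_f(u_2)$ would show that $N(u)-\lambda u$ is a $\mathscr{G}_u$-martingale. This is exactly where the hypothesis that $H_f$ be $L^p$-bounded for some $p>1$ enters: by Doob's $L^p$ inequality it renders the family $\{M(t\wedge L_f(u))\}_{t\ge0}$ uniformly integrable, legitimising the limit $t\to\infty$ in the sampling identity (note that $H_f(t\wedge L_f(u))\uparrow u$ remains bounded). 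Granting that $V$ has no arrivals on the intervals skipped by $L_f$—established just below—the correspondence between the arrivals of $V$ and the jumps of $N$ is one-to-one, so $N$ is a simple point process and the classical Watanabe theorem identifies it as a Poisson process of rate $\lambda$.

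It remains to recover $V(t)=N(H_f(t))$, that is $V(t)=V(L_f(H_f(t)))$, and this is where I expect the real work to lie. Because $\lambda H_f$ is continuous, increasing and adapted, it is the predictable compensator of the point process $V$; on any interval $[L_f(u^-),L_f(u))$ skipped by a jump of $L_f$ the process $H_f$ is flat, so the compensator puts no mass there, and the compensation identity $\mathbb{E}\!\int \mathbf{1}_{\mathrm{gap}}\,dV=\mathbb{E}\!\int \mathbf{1}_{\mathrm{gap}}\,d(\lambda H_f)=0$ forces the non-decreasing process $V$ to have no arrivals on these gaps. Hence $V(L_f(H_f(t)))=V(t)$ and $V=N\circ H_f$, as required. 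The two steps demanding the most care are precisely this ``no arrivals on gaps'' argument—which requires verifying that the gap set is predictable so that the compensation identity may be invoked—and the uniform-integrability justification of the optional sampling in the previous paragraph.
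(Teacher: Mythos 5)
Your proposal is correct and follows the same overall strategy as the paper: invert the time change through a family of stopping times and reduce both implications to the classical Watanabe characterization via Doob's optional sampling theorem. The differences are worth recording. For the direction ``subordinated Poisson $\Rightarrow$ martingale'' the paper samples the compensated Poisson martingale $N(s)-\lambda s$ at the stopping time $H_f(t)$ and uses the $L^p$ bound to get the uniform integrability needed for optional sampling at an unbounded stopping time; you instead condition on the whole path of $H_f$ and use independence of $N$ and $H_f$, which only needs $H_f(t)\in L^1$ and is equally valid. For the converse the paper uses the stopping times $D_f(t)=\inf\{x:H_f(x)\ge t\}$ where you use $L_f(u)$; these coincide up to left limits and both arguments go through. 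The genuinely valuable addition in your write-up is the final step: the paper asserts ``$V(D_f(t))$ is a simple point process'' and passes from $V\circ D_f=N$ to $V=N\circ H_f$ without comment, but both assertions require precisely the ``no arrivals on the gaps of the range of $L_f$'' property that you isolate and prove via the compensation identity $\mathbb{E}\int \mathbf{1}_{\mathrm{gap}}\,dV=\mathbb{E}\int \mathbf{1}_{\mathrm{gap}}\,d(\lambda H_f)=0$ (using that the continuous increasing process $\lambda H_f$ is the predictable compensator of $V$). This fills a real gap in the paper's argument. One small caveat: your appeal to Doob's $L^p$ inequality to get uniform integrability of $\{M(t\wedge L_f(u))\}_{t\ge 0}$ presupposes $M(t)\in L^p$, which is not given; it is cleaner to note the pathwise domination $|M(t\wedge L_f(u))|\le V(L_f(u))+\lambda u$ together with $\mathbb{E}[V(L_f(u))]\le \lambda u$ (by monotone convergence and the stopped-martingale identity), which yields the required uniform integrability directly.
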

\begin{proof}
 The proof follows using a similar argument discussed in Aletti et al. \cite{Aletti2018}.
Suppose the process $\{V(t)- \lambda H_{f}(t)\}$ is $\mathscr{F}_{t}$-martingale and the inverse of $H_f(t)$ are the collections of stopping times for $t \geq 0$ such that
$$D_{f}(t)= \inf\{x\geq0 : H_{f}(x) \geq t\}.$$
Then  $\{V(D_{f}(t))- \lambda H_{f}(D_{f}(t))\}$ is a martingale by optional sampling theorem (see Theorem \rm{6.29} \cite{Kallenberg1997}). Note that since $H_f$ is continuous and increasing then the composition $H_{f}(D_{f}(t))= t,\; \forall\; t$ (see Fortelle \cite{Arnaud2015}). Which implies that  $\{V(D_{f}(t))- \lambda t\}$ is a martingale.  \\
Moreover, as $D_{f}(t)$ is increasing therefore $V(D_{f}(t))$ is a simple point process. By the Watanabe characterization  (see Watanabe \cite{Watanabe1964} and Br\'emaud \cite{Brrmaud1981}), it follows that $V(D_{f}(t)) = N(t) = N(H_{f}(D_{f}(t)))$ is a homogeneous Poisson process with intensity $\lambda >0$. Thus $V(t)= N(H_{f}(t))$ is a subordinated Poisson processes. Conversely, if $V(t)= N(H_{f}(t)),$ then we need to show that   $\{N(H_{f}(t))- \lambda H_{f}(t)\}$ is $\mathscr{F}_{t}$-martingale.
Note that $V\geq0$ and $H_f$ are non-decreasing, and hence the boundedness of $H_f(t)$ in $L^p,\;p>1$ implies that $\{N(H_{f}(t))- \lambda H_{f}(t), 0\leq t\leq T\}$ is uniformly integrable (see, for example, p. \rm{67} \cite{Kallenberg2002}). Since $N(t)-\lambda t$ is a martingale and $H_f(t)$ is a stopping time, the result follows using Doob's optional sampling theorem (see e.g. Theorem \rm{6.29} in \cite{Kallenberg1997}).
\end{proof}

Here, we provide the Watanabe martingale characterization for some well known time-changed Poisson processes where time-change are the inverse subordinators corresponding to the subordinators given in \eqref{Levy_exponent}.
\begin{proposition}
The following results hold:
\begin{enumerate}[a.]
    \item Let $H_f(t) = E_{\alpha}(t)$ be inverse $\alpha$-stable subordinator then $N(E_{\alpha}(t)) - \lambda E_{\alpha}(t)$ is $\{ \mathscr{F}_{t}\}$-martingale iff $N(E_{\alpha}(t))$ is TFPP (see Aletti et al. \cite{Aletti2018}).
    \item Let $H_f(t) = E_{\alpha_1,\alpha_2,\ldots,\alpha_{n}}(t)$ be inverse of mixed stable subordinator then $N(E_{\alpha_1,\alpha_2,\ldots,\alpha_{n}}(t)) - \lambda E_{\alpha_1,\alpha_2,\ldots,\alpha_{n}}(t)$ is $\{ \mathscr{F}_{t}\}$-martingale iff $N(E_{\alpha_1,\alpha_2,\ldots,\alpha_{n}}(t))$ is mixed-fractional Poisson process (see Aletti et al. \cite{Aletti2018}).
    \item Let $H_f(t) = E_{\alpha, \mu}(t)$ be inverse of tempered stable subordinator then $N(E_{\alpha, \mu}(t)) - \lambda E_{\alpha, \mu}(t)$ is $\{ \mathscr{F}_{t}\}$-martingale iff $N(E_{\alpha, \mu}(t))$ is tempered time fractional Poison process (see Gupta et al. \cite{Gupta2020}).
    \item Let $H_f(t) = E_{\alpha_1, \mu_1,\alpha_2,\mu_2,\ldots,\alpha_n, \mu_n}(t)$ be inverse of mixture of tempered stable subordinators (see Gupta et al. \cite {Gupta2021}) then $N(E_{\alpha_1, \mu_1,\alpha_2,\mu_2,\ldots,\alpha_n, \mu_n}) - \lambda E_{\alpha_1, \mu_1,\alpha_2,\mu_2,\ldots,\alpha_n, \mu_n}$ is $\{ \mathscr{F}_{t}\}$-martingale iff \\ $N(E_{\alpha_1, \mu_1,\alpha_2,\mu_2,\ldots,\alpha_n, \mu_n})$ is the mixture tempered time fractional Poison process.
    
    \item Let $H_f(t) = G(t)$ be the inverse of gamma subordinator, then the Poisson process time-changed by $G(t)$ is called here Poisson inverse gamma process. For Poison inverse gamma distribution see Tzougas \cite{Tzougas2020}. The process $N(G(t)) - G(t)$ is a martingale iff N(G(t)) is Poisson inverse Gamma process.
\end{enumerate}
\end{proposition}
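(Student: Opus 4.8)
The plan is to obtain all five equivalences as direct specializations of Proposition~\ref{Watanabe}. That proposition already establishes, for an arbitrary driftless strictly increasing subordinator $L_f$ with infinite L\'evy measure whose inverse $H_f$ is $L^p$-bounded for some $p>1$, that $\{V(t)-\lambda H_f(t)\}$ is an $\mathscr{F}_t$-martingale if and only if $V(t)=N(H_f(t))$. Thus for each entry of \eqref{Levy_exponent} only two things need checking: (i) that the displayed $f(s)$ is the Laplace exponent of such a subordinator, and (ii) that the corresponding inverse subordinator $H_f(t)$ is $L^p$-bounded on every finite horizon. Once (i) and (ii) hold, it remains only to recognize the time-changed process $N(H_f(t))$ as the named one.

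For step (i) I would note that every function in \eqref{Levy_exponent} is a classical Bernstein function: $s^{\alpha}$ and $(s+\mu)^{\alpha}-\mu^{\alpha}$ are the stable and tempered-stable exponents, any convex combination $\sum_i c_i f_i$ of Bernstein functions with $c_i\ge 0$ and $\sum_i c_i=1$ is again Bernstein, and $p\log(1+s/\alpha)$ is the gamma exponent with L\'evy measure $\nu(dx)=p\,x^{-1}e^{-\alpha x}\,dx$. In each case $\nu((0,\infty))=\infty$, so the subordinator has infinite activity and, consequently, $H_f$ has continuous, strictly increasing paths; this places each example squarely under the hypotheses imposed on $L_f$ in Proposition~\ref{Watanabe}.

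Step (ii) is the only quantitative point, and the gamma case is where I expect the real work to sit. Because $H_f$ is non-decreasing, $\sup_{t\le T}\mathbb{E}[H_f(t)^p]=\mathbb{E}[H_f(T)^p]$, so it suffices to bound a single moment. For the stable and tempered families (cases (a)--(d)) the inverse subordinators have every moment finite---e.g. $\mathbb{E}[E_{\alpha}(t)^p]=\Gamma(p+1)\,t^{p\alpha}/\Gamma(p\alpha+1)$---and the mixtures inherit the bound, so $L^p$-boundedness is immediate. For the gamma subordinator there is no convenient closed form for the law of $H_f$, so I would instead use the tail identity $\mathbb{P}(H_f(T)>u)=\mathbb{P}(L_f(u)\le T)$ together with $L_f(u)\sim\mathrm{Gamma}(\text{shape }=pu,\ \text{rate }=\alpha)$: once $pu/\alpha>T$ the Gamma law concentrates above $T$ and $\mathbb{P}(L_f(u)\le T)$ decays faster than any exponential in $u$, whence $\mathbb{E}[H_f(T)^p]<\infty$ for every $p$. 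Securing this moment bound cleanly for the inverse gamma process is the main obstacle; the analogous estimate underlies the remaining cases but is already packaged in the cited moment formulas.

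With (i) and (ii) verified, applying Proposition~\ref{Watanabe} to each subordinator yields the martingale characterization, and matching $N(H_f(t))$ against the definitions gives the five named processes: $N(E_{\alpha}(t))$ is the TFPP, $N(E_{\alpha_1,\dots,\alpha_n}(t))$ the mixed-fractional Poisson process, $N(E_{\alpha,\mu}(t))$ the tempered time-fractional Poisson process, $N(E_{\alpha_1,\mu_1,\dots,\alpha_n,\mu_n}(t))$ the mixture tempered time-fractional Poisson process, and $N(G(t))$ the Poisson inverse-gamma process. This completes the plan for all parts (a)--(e).
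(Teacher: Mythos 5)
Your proposal is correct and follows essentially the same route as the paper: both reduce the statement to Proposition~\ref{Watanabe} and observe that the only thing to check is $L^p$-boundedness of the inverse subordinators. The only difference is in how that bound is obtained --- the paper cites a single uniform tail estimate $\mathbb{P}(H_f(t)>x)\leq ax^{\rho}e^{-bx^{\xi}+cx}$, $\xi>1$, from Kumar and Nane (Prop.~2.1 of \cite{kumar_nane2018}) covering all the subordinators in \eqref{Levy_exponent} at once, whereas you verify finiteness of moments case by case (explicit moment formulas for the stable and tempered families, and the tail identity $\mathbb{P}(H_f(T)>u)=\mathbb{P}(L_f(u)\le T)$ with the Gamma law for the gamma case); both arguments are valid.
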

\begin{proof}
The tail probabilities of the inverse of above discussed subordinators given in \eqref{Levy_exponent}  decay exponentially for large $x$ and is given by  (see Prop. \rm{2.1} in Kumar and Nane \cite{kumar_nane2018})
\begin{equation*}
  \mathbb{P}(H_{f}(t) >x) \leq ax^{\rho}e^{-b x^{\xi} + cx},\;\; a,\;b,\;c\;>0,\;\xi >1, \; \rho \;\in \mathbb{R}.
\end{equation*} 
This shows that the inverse of these subordinators have finite moments of all orders. Hence these subordinators are $L^p$ bounded for $p>1$ and hence the result follows using Prop. \ref{Watanabe}.
\end{proof}
\section*{Acknowledgments} N.G. would like to thank Council of Scientific and Industrial Research (CSIR), India for supporting her research under the fellowship award number 09/1005(0021)2018-EMR-I. Further, A.K. would like to express his gratitude to Science and Engineering Research Board (SERB), India for the financial support under the MATRICS research grant MTR/2019/000286.
\vone
\noindent

\end{document}